\setlist[enumerate]{label={\textnormal{(\roman*)}}}
\newtheorem{theorem}{Theorem}[section]
\newtheorem{lemma}[theorem]{Lemma}
\newtheorem{corollary}[theorem]{Corollary}
\newtheorem{proposition}[theorem]{Proposition}
\theoremstyle{definition}
\newtheorem{definition}[theorem]{Definition}
\newtheorem{remark}[theorem]{Remark}
\newtheorem{problem}[theorem]{Problem}
\newtheorem{conjecture}[theorem]{Conjecture}
\tikzstyle{vertex}=[circle, draw, inner sep=0pt, minimum size=4pt,fill=black]
\tikzset{->-/.style={decoration={
  markings,
  mark=at position #1 with {\arrow{>}}},postaction={decorate}}}
\renewcommand{\tt}[1]{\textnormal{\texttt{{#1}}}}
\newcommand{\Rext}{\mathbb{R}_{\text{ext}}}
\DeclareMathOperator{\RT}{RT}
\DeclareMathOperator{\ERT}{ERT}
\title{Extremal overlap-free and extremal $\beta$-free binary words}
\author{Lucas Mol and Narad Rampersad\thanks{The work of Narad Rampersad is supported by the Natural Sciences and Engineering Research Council of Canada (NSERC), [funding reference number 2019-04111].}\\
\small{Department of Mathematics and Statistics, The University of Winnipeg}\\
\small{\href{mailto:l.mol@uwinnipeg.ca}{l.mol@uwinnipeg.ca}, \href{mailto:n.rampersad@uwinnipeg.ca}{n.rampersad@uwinnipeg.ca}}\\ \\
Jeffrey Shallit\thanks{The work of Jeffrey Shallit is supported by the Natural Sciences and Engineering Research Council of Canada (NSERC), [funding reference number 2018-04118].}\\
\small{School of Computer Science, University of Waterloo}\\ \small{\href{mailto:shallit@uwaterloo.ca}{shallit@uwaterloo.ca}}}
\date{}
\begin{document}

\maketitle

\begin{abstract}
\noindent
An overlap-free (or $\beta$-free) word $w$ over a fixed alphabet $\Sigma$ is \emph{extremal} if every word obtained from $w$ by inserting a single letter from $\Sigma$ at any position contains an overlap (or a factor of exponent at least $\beta$, respectively).  We find all lengths which admit an extremal overlap-free binary word.  For every extended real number $\beta$ such that $2^+\leq\beta\leq 8/3$, we show that there are arbitrarily long extremal $\beta$-free binary words.

\noindent
{\bf MSC 2010:} 68R15

\noindent
{\bf Keywords:} overlap-free word; extremal overlap-free word; $\beta$-free word; extremal $\beta$-free word

\end{abstract}

\section{Introduction}

Throughout, we use standard definitions and notations from combinatorics on words (see~\cite{LothaireAlgebraic}).  For every integer $n\geq 2$, we let $\Sigma_n$ denote the alphabet $\{\tt{0},\tt{1},\ldots,\tt{n-1}\}$.  The word $u$ is a \emph{factor} of the word $w$ if we can write $w=xuy$ for some (possibly empty) words $x,y$.  A \emph{square} is a word of the form $xx$, where $x$ is nonempty.  An \emph{overlap} is a word of the form $axaxa$, where $a$ is a letter and $x$ is a (possibly empty) word.  A word is \emph{square-free} if it contains no square as a factor, and \emph{overlap-free} if it contains no overlap as a factor.
Early in the twentieth century, Norwegian mathematician Axel Thue~\cite{Thue1906,Thue1912} demonstrated that one can construct arbitrarily long square-free words over a ternary alphabet, and arbitrarily long overlap-free words over a binary alphabet.  For an English translation of Thue's work, see~\cite{Berstel1995}. Thue's work is recognized as the beginning of the field of combinatorics on words~\cite{BerstelPerrin2007}.

Let $w$ be a word over a fixed alphabet $\Sigma$.  An \emph{extension} of $w$ is a word of the form $w'aw''$, where $a\in \Sigma$, and $w'w''=w$ for some possibly empty words $w',w''\in \Sigma^*$.  For example, over the English alphabet, the English word \tt{pans} has extensions including the English words \tt{spans}, \tt{plans}, \tt{pawns}, \tt{pants}, and \tt{pansy}.  The word $w$ is \emph{extremal square-free} if $w$ is square-free, and every extension of $w$ contains a square.   For example, the word
\[
\tt{abcabacbcabcbabcabacbcabc}
\]
of length $25$ is an extremal square-free word of minimum length over the alphabet $\{\tt{a},\tt{b},\tt{c}\}$.  The concept of extremal square-free word was recently introduced by Grytczuk et al.~\cite{Grytczuk2019}, who demonstrated that there are arbitrarily long extremal square-free words over a ternary alphabet.  Two of the present authors~\cite{MolRampersad2020} adapted their ideas to find all lengths admitting extremal square-free ternary words.  

In this paper, we consider some variations of extremal square-free words, with a focus on the binary alphabet $\Sigma_2=\{\tt{0},\tt{1}\}$.  We begin by considering extremal overlap-free words, as suggested by Grytczuk et al.~\cite{Grytczuk2019}.  For a word $w$ over a fixed alphabet $\Sigma$, we say that $w$ is \emph{extremal overlap-free} if $w$ is overlap-free, and every extension of $w$ contains an overlap.  For example, the word $\tt{0010011011}$ of length $10$ is an extremal overlap-free word of minimum length over $\Sigma_2$.

While there is an extremal square-free ternary word of every sufficiently large length, the same cannot be said for extremal overlap-free binary words.  Our first main result is the following characterization of the lengths of extremal overlap-free binary words.

\begin{theorem}\label{main}
Let $n$ be a nonnegative number.  Then there is an extremal overlap-free word of length $n$ over the alphabet $\Sigma_2$ if and only if $n$ is in the set
$$\mathcal{N}:=\{10,12\}\cup\{2k \colon\ k\geq 10\} \cup \left\{2^k+1 \colon\ k \geq 5\right\} \cup \left\{3\cdot 2^k + 1 \colon\ k \geq 3\right\}.$$
\end{theorem}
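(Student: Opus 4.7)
I would prove Theorem \ref{main} by handling the two directions separately, relying on the classical factorization theorem for overlap-free binary words (Restivo--Salemi, refined by Kfoury): every overlap-free binary word $w$ of length at least $7$ admits a unique factorization $w = x \mu(y) z$, where $\mu$ is the Thue--Morse morphism defined by $\mu(\tt{0}) = \tt{01}$ and $\mu(\tt{1}) = \tt{10}$, the caps satisfy $x,z \in \{\varepsilon, \tt{0}, \tt{1}, \tt{00}, \tt{11}\}$, and $y$ is overlap-free. Since $|w| = 2|y| + |x| + |z|$ with $|x|, |z| \in \{0,1,2\}$, this factorization explains why $\mathcal{N}$ is essentially closed under doubling and suggests that the only admissible odd lengths take the special forms $2^k+1$ and $3 \cdot 2^k + 1$, corresponding to iterating $\mu$ on a seed of length $1$ or $3$ and appending a single extra letter.

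For the sufficiency direction I would produce an extremal overlap-free word of each length in $\mathcal{N}$. The sporadic lengths $10$ and $12$ are witnessed by explicit words, for instance $\tt{0010011011}$ from the introduction. For the infinite families, I would identify a finite set of extremal overlap-free seeds and establish a lifting lemma of the following shape: if $w$ is extremal overlap-free, then a controlled modification of $\mu(w)$ obtained by prepending and appending suitable caps is again extremal overlap-free. Iterating this doubles the length, so seeds of carefully chosen lengths would cover every even $n \geq 20$. The odd families arise by attaching a single letter at the end of $\mu^k$ applied to short seeds: a length-$1$ seed yields lengths $2^k + 1$, and a length-$3$ seed yields lengths $3\cdot 2^k + 1$. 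In each case, overlap-freeness follows from the fact that $\mu$ preserves overlap-freeness; the nontrivial task is to verify extremality, which I would reduce to checking finitely many insertion positions in the seed.

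For the necessity direction, let $w$ be extremal overlap-free of length $n$ larger than some explicit bound, and write $w = x\mu(y)z$. The central step is a descent lemma asserting that extremality of $w$ forces $y$, together with appropriate flanking caps, to itself be extremal overlap-free in a sense depending on $|x|+|z|$. Combined with a finite base case obtained by direct enumeration of overlap-free binary words up to length roughly $20$, the recursion $|w| = 2|y| + |x| + |z|$ then confines $n$ to $\mathcal{N}$.

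The main obstacle will be the descent lemma. One must show that if every single-letter insertion into $w = x\mu(y)z$ creates an overlap, then every single-letter insertion into $y$ also does; this requires a careful case analysis of where the inserted letter falls relative to the $\mu$-image blocks and how the resulting overlap descends to an overlap in the corresponding extension of $y$. Insertions near the seam between the caps $x,z$ and $\mu(y)$ are especially delicate, and are presumably what account for the sporadic lengths $10, 12$ and for the exclusion of the small even and odd lengths not in $\mathcal{N}$.
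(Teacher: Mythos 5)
Your sufficiency plan is broadly in the spirit of what the paper does: explicit witnesses for the sporadic lengths, a lifting lemma that applies $\mu$ and adjusts the two end letters to double lengths (the paper lifts a technical class of ``earmarked'' words rather than extremal ones, and needs a separate construction for lengths $2^k$, but these are implementation details), and odd lengths obtained by prepending a single letter to a conjugate of $\mu^{k-1}(\tt{00})$ or $\mu^{k-1}(\tt{010010})$. That half of your outline is workable.

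The genuine gap is the necessity direction. First, note that no descent is needed for even $n$ at all: $\mathcal{N}$ contains \emph{every} even number $n\geq 20$, so the only even-length necessity content is a finite exhaustive search over $n\in\{0,2,4,6,8,14,16,18\}$. Second, for odd $n$ your descent lemma fails on both counts. (i) The reduction goes the wrong way: a single-letter insertion into $y$ corresponds under $\mu$ to a \emph{two}-letter insertion into $\mu(y)$, so extremality of $w=x\mu(y)z$ (which only speaks about single-letter insertions into $w$) gives you no control over extensions of $y$; conversely, a single-letter insertion into $w$ breaks the block structure of $\mu(y)$ and does not descend to an insertion into $y$. (ii) Even granting the descent, the recursion $|w|=2|y|+|x|+|z|$ cannot confine odd $n$ to $\{2^k+1\}\cup\{3\cdot 2^k+1\}$: for, say, $n=41$ one gets $|y|\in\{19,20\}$, and $20\in\mathcal{N}$, so the induction hypothesis rules nothing out --- any descent landing on an even length $\geq 20$ is vacuous. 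The paper's actual argument is different and is the key missing idea: if $u$ is extremal overlap-free, then both $u\tt{0}$ and $u\tt{1}$ end in overlaps, so $u$ ends in two distinct squares; Thue's classification of overlap-free binary squares (they are exactly the conjugates of the words $\mu^k(\tt{00})$, $\mu^k(\tt{11})$, $\mu^k(\tt{010010})$, $\mu^k(\tt{101101})$), combined with the Restivo--Salemi factorization, then forces the longest square suffix $vv$ to satisfy $u=\overline{a}vv$ with $vv=\overline{a}\mu(z)a$, whence $|u|=|vv|+1\in\{2^k+1\}\cup\{3\cdot 2^k+1\}$. Without the classification of overlap-free squares, your outline has no mechanism to exclude the remaining odd lengths.
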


After proving Theorem~\ref{main}, we consider a more general problem, which we now provide background for.  Let $w=w_1w_2\cdots w_n$ be a word, where the $w_i$'s are letters.  For an integer $p\geq 1$, we say that $w$ has \emph{period} $p$ if $w_{i+p}=w_i$ for all $1\leq i\leq n-p$.  Note that $w$ may have many periods; the minimal period of $w$ is called \emph{the} period of $w$.  The \emph{exponent} of $w$ is the length of $w$ divided by the period of $w$.  For a real number $b$, the word $w$ is \emph{$b$-free} if it contains no factor of exponent greater than or equal to $b$, and the word $w$ is \emph{$b^+$-free} if it contains no factor of exponent greater than $b$.  So $2$-free words are exactly the square-free words, and $2^+$-free words are exactly the overlap-free words.

For ease of writing, we unify the notions of $b$-free word and $b^+$-free word by considering $\beta$-free words, where $\beta$ belongs to the set of ``extended real numbers''.  Let $\Rext$ denote the set of extended real numbers, consisting of all real numbers, together with all real numbers with a $+$, where $x^+$ covers $x$, and the inequalities $y\leq x$ and $y<x^+$ are equivalent.  For $\beta\in \Rext$, we say that $w$ is $\beta$-free if no factor of $w$ has exponent greater than or equal to $\beta$.

\begin{definition}
Let $w$ be a word over a fixed alphabet $\Sigma$, and let $\beta\in \Rext$.  We say that $w$ is \emph{extremal $\beta$-free} if $w$ is $\beta$-free, and every extension of $w$ contains a factor of exponent greater than or equal to $\beta$.
\end{definition}

We consider the following problem.

\begin{problem}\label{BinaryProblem}
For which $\beta\in\Rext$ do there exist arbitrarily long extremal $\beta$-free words over $\Sigma_2$?
\end{problem}

On the affirmative side, by Theorem~\ref{main}, we know that there are arbitrarily long extremal $2^+$-free words over $\Sigma_2$.  On the negative side, every binary word of length at least $4$ contains a square, so it follows that for all $\beta\leq 2$, there do not exist arbitrarily long extremal $\beta$-free words over $\Sigma_2$.  We make some further progress on Problem~\ref{BinaryProblem} on the affirmative side by establishing the following theorem.

\begin{theorem}\label{BetaTheorem}
Let $\beta\in\Rext$ satisfy $2^+\leq \beta\leq 8/3$.  Then there are arbitrarily long extremal $\beta$-free words over $\Sigma_2$.
\end{theorem}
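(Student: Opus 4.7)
The plan is to reduce the theorem to a single construction by exploiting monotonicity in $\beta$. Suppose a binary word $w$ is simultaneously overlap-free (i.e., $2^+$-free) and extremal $8/3$-free. Then for every $\beta\in[2^+,8/3]$ one has (i) $w$ has no factor of exponent exceeding $2$, hence no factor of exponent at least $\beta$, so $w$ is $\beta$-free; and (ii) every extension of $w$ contains a factor of exponent at least $8/3$, which is in particular at least $\beta$, so every extension of $w$ fails to be $\beta$-free. Hence $w$ is extremal $\beta$-free for every $\beta\in[2^+,8/3]$ simultaneously, and it would suffice to construct arbitrarily long overlap-free binary words that are extremal $8/3$-free.

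For the construction, I would first catalogue which factors of a binary word can have exponent at least $8/3$. A factor $u$ of period $p$ has exponent $|u|/p\ge 8/3$ only when $|u|\ge\lceil 8p/3\rceil$; over $\Sigma_2$ this forces very structured factors such as $\tt{000}$ or $\tt{111}$ for $p=1$, $\tt{010101}$ or $\tt{101010}$ for $p=2$, a length-$8$ cube-like factor for $p=3$, and so on. I would then try to produce words $w_n$ of arbitrary length as images of a short seed under iterates of an overlap-preserving binary morphism, for instance the Thue--Morse morphism $\mu\colon \tt{0}\mapsto\tt{01},\ \tt{1}\mapsto\tt{10}$, whose iterates are known to preserve overlap-freeness. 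The seed and morphism should be chosen so that every single-letter insertion into $w_n$ lands in a position forced to create one of the structured factors above, while $w_n$ itself avoids all such factors.

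The main obstacle is that being extremal $8/3$-free is not inherited from being extremal overlap-free, and is not automatically preserved by natural morphisms. For example, the length-$10$ extremal overlap-free word $\tt{0010011011}$ from the introduction is not extremal $8/3$-free: the extension $\tt{10010011011}$ contains the overlap $\tt{1001001}$, whose largest-exponent factor has exponent only $7/3<8/3$. So one cannot merely invoke Theorem~\ref{main}; the seed and morphism must be engineered so that each of the two candidate insertions at each position either triples a letter, completes $\tt{010101}$ or $\tt{101010}$, or extends a longer tightly periodic factor past the $8/3$ threshold. Verifying this uniform inductive lift, presumably via a finite computer check for a base case together with a structural lemma lifting the extension property through the morphism, is where I expect the heaviest technical work; should no single family suffice, a fallback would be to treat subintervals of $[2^+,8/3]$ with different constructions.
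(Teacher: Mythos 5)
Your opening reduction is sound and is precisely the device the paper uses: it defines $w$ to be \emph{$(\alpha,\beta)$-extremal} when $w$ is $\alpha$-free and every extension of $w$ contains a factor of exponent at least $\beta$, and observes that such a $w$ is extremal $\gamma$-free for every $\gamma$ with $\alpha\leq\gamma\leq\beta$. The gap is in the existence claim your plan rests on, namely arbitrarily long words that are simultaneously overlap-free and such that \emph{every} extension contains a factor of exponent at least $8/3$. You do not construct such a family, and there is a concrete obstruction to one existing. Iterating the Restivo--Salemi factorization theorem, every sufficiently long overlap-free binary word contains $\mu^4(\tt{0})$ or $\mu^4(\tt{1})$ far from both of its ends, hence contains $\tt{0110100110}$ or its complement deep in its interior. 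Inserting $\tt{1}$ there to form $\tt{01101}\cdot\tt{1}\cdot\tt{00110}$ produces the word $\tt{01101100110}$, whose highest-exponent factor is the $7/3$-power $\tt{0110110}$; no $8/3$-power appears. (Your own example $\tt{10010011011}$ exhibits the same phenomenon.) This is exactly why the paper's Lemma~\ref{MiddleExtensions} guarantees only exponent $7/3$ for internal insertions into overlap-free words: an overlap-free word cannot in general force its extensions up to exponent $8/3$, so the ``engineering'' you hope for in the seed and morphism is not available.

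The paper's actual proof therefore relaxes the avoidance exponent in tandem with the extension exponent: it establishes five separate statements, producing arbitrarily long $(\alpha,\beta)$-extremal words for the pairs $(2^+,7/3)$, $(7/3^+,17/7)$, $(17/7^+,5/2)$, $(5/2^+,18/7)$, and $(18/7^+,8/3)$, whose intervals $[\alpha,\beta]$ cover all of $[2^+,8/3]$. Only the first level is obtained by a Thue--Morse-style construction of the kind you sketch (namely $\tt{00}\mu^2(\tt{11}v\tt{11})\tt{00}$ for suitable Thue--Morse factors $\tt{011}v\tt{110}$); the remaining four take images of arbitrary square-free ternary words under carefully chosen uniform synchronizing morphisms, wrapped with short prefix and suffix words, with the $\alpha$-freeness of the images certified by an extension of a lemma of Ochem that reduces the verification to a finite computer check. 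So the fallback you mention in one clause---treating subintervals with different constructions---is in fact the entire content of the proof, and carrying it out requires the morphisms, the Ochem-type transfer lemma, and the attendant verifications, none of which your proposal supplies.
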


We also make the following conjecture.

\begin{conjecture}\label{BinaryConjecture}
There is some number $\alpha\in \Rext$ such that for all $\beta\in \Rext$ satisfying $\beta\geq \alpha$, there are no extremal $\beta$-free words over $\Sigma_2$.
\end{conjecture}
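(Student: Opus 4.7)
The plan is to establish the conjecture by exhibiting an explicit threshold $\alpha$ and showing, for every $\beta \geq \alpha$, that every sufficiently long $\beta$-free binary word can be extended by the insertion of a single letter without introducing a factor of exponent at least $\beta$. A finite check (by exhaustive computer search, or by an automated tool such as Walnut) would then dispose of the remaining short lengths and yield Conjecture~\ref{BinaryConjecture}.

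The first step is a local obstruction analysis. Suppose $w$ is $\beta$-free and inserting the letter $a$ between positions $i$ and $i+1$ creates a factor $v$ of period $p$ and exponent at least $\beta$. Then $|v|\geq \beta p$, and the word $v'$ obtained from $v$ by deleting the inserted $a$ is a factor of $w$ that satisfies the $p$-periodic condition \emph{everywhere except} at the position vacated by $a$, where it disagrees with the pattern by exactly one letter. In short, each obstructed pair $(i,a)$ is witnessed by a specific ``near-$\beta$ factor'' of $w$ of some small period $p \leq (|w|+1)/\beta$, and the pair $(p,a)$ is essentially determined by $i$.

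The second step is a global counting argument. An extremal $\beta$-free word $w$ of length $n$ must have all $2(n+1)$ pairs $(i,a)$ obstructed. Each near-$\beta$ factor of period $p$ can account for only $O(1)$ such pairs, so extremality forces $\Omega(n)$ near-$\beta$ factors in $w$. The target is a density estimate of the form: for every $\beta \geq \alpha$, any $\beta$-free binary word of length $n$ contains at most $cn$ near-$\beta$ factors of period at most $(n+1)/\beta$, with some absolute constant $c<2$. Any such estimate immediately yields the conjecture, since one of the $2(n+1)$ insertion options would remain unobstructed.

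The main obstacle is establishing this density estimate. The rigid Restivo--Salemi structure underlying the overlap-free analysis of Theorem~\ref{main} disappears once $\beta>2^+$, and for $\beta$ noticeably larger than $2$ the $\beta$-free binary words become very abundant with no comparable structural control. A natural starting point would be the regime $\beta\geq 4$, where squares and cubes are freely allowed but fourth powers are forbidden; here, classical results on the sparsity of high-exponent repetitions should give a workable counting bound. Pushing the argument down toward the optimal $\alpha$, which by Theorem~\ref{BetaTheorem} must exceed $8/3$, will likely require new structural results on $\beta$-free binary words together with computer-assisted verification of small cases, and I expect this last regime to be the genuinely hard part.
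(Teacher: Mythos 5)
This statement is a conjecture that the paper leaves open: the authors explicitly say they have only very weak computational evidence even for the candidate value $\alpha=8/3^+$, and no proof is given anywhere in the paper. Your proposal is likewise not a proof; it is a program whose central step---the density estimate asserting that every $\beta$-free binary word of length $n$ contains at most $cn$ ``near-$\beta$ factors'' for some absolute $c<2$---is precisely the part you acknowledge you cannot establish. Since that estimate is what would force some insertion to remain unobstructed, leaving it unproved leaves the conjecture unproved; you have reduced an open problem to another open statement of essentially the same difficulty, and for no value of $\alpha$ does the argument close.

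Two steps of the reduction itself also need repair. First, the claim that each near-$\beta$ factor of period $p$ obstructs only $O(1)$ insertion pairs is not justified: a single long block of period $p$ and exponent just below $\beta$ can in principle witness obstructions at many insertion positions (e.g.\ at several positions whose distances from its boundary are multiples of $p$), so ``all $2(n+1)$ pairs obstructed $\Rightarrow$ $\Omega(n)$ distinct witnesses'' requires an injectivity argument you have not supplied. Second, the period bound $p\leq (|w|+1)/\beta$ permits periods linear in $n$, so the witnesses are not uniformly local objects, and it is not clear that the ``classical sparsity of high-exponent repetitions'' you invoke for $\beta\geq 4$ yields a bound of the required form; note also that for large $\beta$ the $\beta$-free binary words include words with very many cubes and near-fourth-powers, so abundance of witnesses is the expected behaviour, not the exception. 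As it stands, the proposal identifies a plausible line of attack but proves nothing.
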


It is possible that Conjecture~\ref{BinaryConjecture} is true with $\alpha=8/3^+$, but we have only very weak computational evidence supporting this.  If one could show that Conjecture~\ref{BinaryConjecture} is true with $\alpha=8/3^+$, then it would completely answer Problem 1.3.

The layout of the remainder of the paper is as follows.  We prove Theorem~\ref{main} in Section~\ref{EvenSection} and Section~\ref{OddSection}.  We consider the even lengths in Section~\ref{EvenSection}, and the odd lengths in Section~\ref{OddSection}.  We prove Theorem~\ref{BetaTheorem} in Section~\ref{BetaSection}.  We conclude with a discussion of some open problems and conjectures over larger alphabets.

\section{Extremal overlap-free words of even length}\label{EvenSection}

In this section, we characterize the even lengths for which there are
extremal overlap-free binary words.  Throughout the remainder of the paper, we let $\mu:\Sigma_2^*\rightarrow \Sigma_2^*$ denote the \emph{Thue-Morse morphism}, defined by $\mu(\tt{0})=\tt{01}$ and $\mu(\tt{1})=\tt{10}$.  The \emph{Thue-Morse word} $\mathbf{t}$ is the unique fixed point of the morphism $\mu$ that begins with $\tt{0}$.  In other words, we have $\mathbf{t}=\mu^\omega(\tt{0})$.  The Thue-Morse word is the prototypical example of a $2$-automatic sequence.  We begin with a lemma that is used frequently in the rest of the paper.

\begin{lemma}\label{MiddleExtensions}
Let $w\in\Sigma_2^*$ be an overlap-free word of length at least $10$, and write $w=w'w''$ with $|w'|,|w''|\geq 5$.  Then for every letter $a\in\Sigma_2$, the extension $w'aw''$ contains an overlap of period at most $3$ (and hence a factor of exponent at least $7/3$).
\end{lemma}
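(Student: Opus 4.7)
My plan is a finite case analysis on the letters surrounding the insertion point. Write the last five letters of $w'$ as $L = s_5 s_4 s_3 s_2 s_1$ and the first five of $w''$ as $R = t_1 t_2 t_3 t_4 t_5$. Any overlap of period at most $3$ has length at most $7$, so any overlap of period $\le 3$ in $w'aw''$ that was not already in $w=w'w''$ must straddle the inserted letter, and hence lies inside the $11$-letter window $L a R$. Consequently the lemma reduces to the finitary statement: for every overlap-free binary word $LR$ of length $10$ and every $a\in\Sigma_2$, the word $L a R$ contains an overlap of period at most $3$.

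The first step disposes of length-$3$ overlaps: if any of the three three-letter windows $s_2 s_1 a$, $s_1 a t_1$, or $a t_1 t_2$ equals $000$ or $111$, we are done immediately. This restricts $(s_2,s_1,a,t_1,t_2)$ to a short list, which the symmetries $0\leftrightarrow 1$ and $L\leftrightarrow R$ cut down further. For each remaining configuration, I would consider in turn the five positions at which a length-$5$ period-$2$ overlap ($01010$ or $10101$) could cross the inserted $a$, and the five positions at which a length-$7$ period-$3$ overlap $abcabca$ could cross it. Each potential overlap imposes specific equalities on the still-unknown letters $s_3, s_4, s_5, t_3, t_4, t_5$; conversely, assuming that no such overlap appears imposes inequality constraints on those letters.

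The final step is to assume that no overlap of period at most $3$ appears in $L a R$ and to combine the resulting constraints with the overlap-freeness of $LR$ itself, which forbids the patterns $000$, $111$, $01010$, $10101$, and the length-$7$ period-$3$ overlaps anywhere inside $LR$. In each remaining subcase the accumulated constraints are inconsistent, producing the required contradiction and therefore an overlap of period $\le 3$ in $L a R$. The main obstacle is bookkeeping: although each individual subcase is a short deduction, there are on the order of a dozen after symmetry reduction, and one must choose the order of case splits carefully to avoid an unwieldy tree. An equivalent option is to present the claim as a direct computer enumeration over the (few dozen) overlap-free binary words of length $10$, each paired with the two possible inserted letters, and to exhibit in each instance the specific overlap of period $1$, $2$, or $3$ produced by the insertion.
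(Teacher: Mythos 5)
Your reduction to the $11$-letter window $LaR$, where $L$ and $R$ are the length-$5$ suffix of $w'$ and length-$5$ prefix of $w''$, is exactly the reduction the paper uses: its proof consists of the single observation that it suffices to check the statement for all overlap-free binary words of length exactly $10$, verified by computer. Your closing fallback (direct enumeration over the few dozen overlap-free words of length $10$ with each inserted letter) is precisely the paper's proof, so the approaches coincide; the hand case analysis you sketch is a legitimate but unnecessary elaboration.
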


\begin{proof}
It suffices to check the lemma statement for all overlap-free words in $\Sigma_2^*$ of length exactly $10$, which is completed easily by computer.
\end{proof}

\begin{definition}
A word $w\in\Sigma_2^*$ is called \emph{earmarked} if all of the following conditions are satisfied:
\begin{enumerate}
\item $w$ is overlap-free;
\item the length $4$ prefix of $w$ is in $\{\tt{0010},\tt{1101}\}$; and
\item the length $4$ suffix of $w$ is $\tt{0100}$. 
\end{enumerate}
\end{definition}

\begin{lemma}\label{EarmarkedToExtremal}
Let $u$ be an earmarked word of length at least $8$.  Let $w$ be the word obtained from $v=\mu(u)$ by complementing the first and last letters.  Then $w$ is both earmarked and extremal overlap-free.
\end{lemma}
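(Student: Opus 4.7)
The plan is to verify the three defining properties of $w$ in turn: the prefix and suffix conditions, overlap-freeness, and extremality. The prefix/suffix condition follows from an immediate calculation: since $\mu(\tt{0010}) = \tt{01011001}$ and $\mu(\tt{1101}) = \tt{10100110}$, complementing the first letter of $v = \mu(u)$ yields $w$ starting with $\tt{11011001}$ or $\tt{00100110}$; and since $\mu(\tt{0100}) = \tt{01100101}$, complementing the last letter yields $w$ ending in $\tt{01100100}$. Thus the length-$4$ prefix of $w$ lies in $\{\tt{0010},\tt{1101}\}$, the length-$4$ suffix is $\tt{0100}$, and both the length-$8$ prefix and length-$8$ suffix of $w$ are completely determined.

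For overlap-freeness, I would start from Thue's classical theorem to get that $v = \mu(u)$ is overlap-free. Since $w$ agrees with $v$ except at positions $1$ and $2n$ (where $n = |u|$), any overlap in $w$ must be a factor containing one of these modified positions. Overlaps have odd length while $|w| = 2n$ is even, so no overlap can coincide with $w$, and hence each overlap either starts at position $1$ or ends at position $2n$. For an overlap of period $p$ starting at position $1$, the identities $v_{2k-1} = u_k$ and $v_{2k} = \bar u_k$ translate the periodicities $w_i = w_{i+p}$ into letter relations on $u$. A parity split on $p$ then produces a contradiction in every case: for even $p = 2q$, I obtain $u_k = u_{k+q}$ for $k = 1,\ldots,q+1$, so $u_1 u_2 \cdots u_{2q+1}$ is an overlap of period $q$ in $u$; for odd $p = 2q+1$ with $q \geq 2$, I obtain $u_1 = u_2 = \cdots = u_{q+1}$, yielding a factor $\tt{000}$ or $\tt{111}$ in $u$; for $p = 3$, the constraints force $u_1 u_2 u_3 u_4$ to have the form $aabb$ with $a \neq b$, incompatible with the earmarked prefix $\tt{0010}$ or $\tt{1101}$; and $p = 1$ would require $w$ to begin with $\tt{aaa}$, but the actual length-$3$ prefix of $w$ is $\tt{110}$ or $\tt{001}$. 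A symmetric argument using the earmarked suffix $\tt{0100}$ rules out overlaps ending at position $2n$.

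For extremality, consider any extension $w'aw''$ with $w = w'w''$. If both $|w'| \geq 5$ and $|w''| \geq 5$, Lemma~\ref{MiddleExtensions} directly supplies an overlap (in fact of period at most $3$). Otherwise $a$ is inserted within five positions of an endpoint of $w$, and the affected end of the extension is determined by $a$ together with the known length-$8$ prefix ($\tt{11011001}$ or $\tt{00100110}$) or length-$8$ suffix ($\tt{01100100}$) of $w$. A short case check over the ten near-start and ten near-end insertions shows that each such extension contains an overlap, typically one of $\tt{000}$, $\tt{111}$, $\tt{01010}$, $\tt{10101}$, the period-$3$ overlap $\tt{1001001}$, or a period-$4$ overlap such as $\tt{001100110}$ or $\tt{011001100}$.

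The main obstacle is the overlap-freeness step: converting a hypothetical boundary overlap of $w$ into combinatorial constraints on $u$ via the $\mu$-block structure, and then eliminating every possible period. The earmarked prefix and suffix of $u$ are engineered precisely to dispose of the short residual periods ($p = 1$ and $p = 3$) that the generic overlap-in-$u$ contradiction does not cover.
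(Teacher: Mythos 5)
Your proposal is correct and follows the same overall architecture as the paper's proof: the prefix and suffix of $w$ are computed directly from $\mu$ applied to the earmarked ends of $u$; overlap-freeness of $w$ is reduced to overlaps occurring as a prefix or suffix, since the interior of $w$ is a factor of the overlap-free word $\mu(u)$; and extremality is obtained by combining Lemma~\ref{MiddleExtensions} with a finite inspection of the at most twenty extensions near the two ends of $w$. The one place you genuinely diverge is in eliminating a hypothetical boundary overlap. The paper observes that the length-$5$ prefix of $w$ (namely $\tt{11011}$ or $\tt{00100}$) can occur only as a prefix or suffix of an overlap-free word, which forces the period of any overlap prefix to be at most $4$, and then finishes by inspection. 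You instead desubstitute through $\mu$ and split on the parity of the period $p$: even $p=2q$ yields an overlap of period $q$ in $u$ itself, odd $p=2q+1$ with $q\geq 2$ yields a cube in $u$, and the residual cases $p=1$ and $p=3$ are excluded by the earmarked prefix and suffix of $u$ (the $p=3$ case forcing the pattern $aabb$, which $\tt{0010}$, $\tt{1101}$, and $\tt{0100}$ all avoid). Your version is longer but more self-contained, and it makes explicit why the particular earmarked prefixes and suffix were chosen; the paper's version is shorter but relies on an unproved (though easily verified) occurrence property of $\tt{11011}$ and a terminal appeal to inspection. Both arguments are valid, and your boundary case analysis for extremality checks out against the known length-$8$ prefix $\tt{11011001}$ (or $\tt{00100110}$) and suffix $\tt{01100100}$.
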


\begin{proof}
Assume that $u$ has prefix $0010$; the case that $u$ has prefix $1101$ is handled similarly.  So we may write $u=\tt{0010}u'\tt{0100}$ for some word $u'\in\Sigma_2^*$.  It follows that 
\[
w=\tt{11011001}\mu(u')\tt{01100100}
\] 
So $w$ has length $4$ prefix $\tt{1101}$ and length $4$ suffix $\tt{0100}$.  

We now show that $w$ is overlap-free.  First of all, since $u$ and $\mu$ are overlap-free, we see that $v$ is overlap-free. Now suppose that $w$ contains the overlap $x$.  Since $v$ is overlap-free, we see that $x$ must be either a prefix or a suffix of $w$.  Assume that $x$ is a prefix of $w$; the case that $x$ is a suffix of $w$ is handled similarly.  Since the word $\tt{11011}$ may only appear as a prefix or a suffix of an overlap-free word, we conclude that the period of $x$ is at most $4$.  But by inspection, there is no such overlap in $w$.

Finally, we show that $w$ is extremal overlap-free.  By Lemma~\ref{MiddleExtensions}, it suffices to check that every extension of $w$ of the form $w'aw''$, where $w=w'w''$, $a\in\Sigma_2$, and either $|w'|\leq 4$ or $|w''|\leq 4$, contains an overlap.  We complete this check by inspection.
\end{proof}

\begin{lemma}\label{TMEarmarked}
Let $n\geq 10$ be an integer satisfying $n\not\equiv 0\pmod{4}$.  Then there is an earmarked word of length $n$.
\end{lemma}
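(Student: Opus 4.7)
The plan is to construct, for each $n \geq 10$ with $n \not\equiv 0 \pmod 4$, an earmarked word of length $n$ explicitly, using factors of the Thue-Morse word $\mathbf{t}$ (possibly with small modifications near the boundary). Since any factor of $\mathbf{t}$ is automatically overlap-free, the overlap-freeness half of the definition comes for free once we locate the word as a factor of $\mathbf{t}$; the real content is realizing the prefix and suffix conditions simultaneously.

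First I would dispose of the base cases by direct inspection, exhibiting earmarked words of each small length in each nonzero residue class mod $4$. For instance, $\mathbf{t}[9\mathinner{\ldotp\ldotp}18] = \tt{0010110100}$ is already earmarked of length $10$, and similarly short explicit strings handle $n \in \{11, 13, 14, 15, 17, 18, 19\}$, giving at least one example in each of the classes $1, 2, 3 \pmod 4$ covered by the lemma. Doubling these via Lemma~\ref{EarmarkedToExtremal} does \emph{not} suffice on its own, because doubling sends odd lengths to $\equiv 2 \pmod 4$ and lengths $\equiv 2$ to $\equiv 0$, so odd lengths need a direct construction each.

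For large $n$, the plan is to exploit the self-similarity $\mathbf{t} = \mu(\mathbf{t})$ to locate, for each residue class, factors of $\mathbf{t}$ of length $n$ starting with $\tt{0010}$ or $\tt{1101}$ and ending with $\tt{0100}$. The positions of occurrences of $\tt{0010}$, $\tt{1101}$, and $\tt{0100}$ in $\mathbf{t}$ are $2$-automatic subsets of $\mathbb{Z}_{\geq 0}$, so the set of lengths $n$ achievable as such a factor is itself $2$-automatic and can in principle be computed (e.g.\ by hand on a small automaton, or mechanically via a tool like Walnut); the claim to verify is that this set contains every $n \geq 10$ with $n \not\equiv 0 \pmod 4$. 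Concretely, I would parameterize starting positions $i_r(k)$ in $\mathbf{t}$, one family for each residue $r \in \{1,2,3\} \pmod 4$, and check that the corresponding window $\mathbf{t}[i_r(k)\mathinner{\ldotp\ldotp}i_r(k)+n-1]$ has the required prefix and suffix; if a residue class is not directly reachable inside $\mathbf{t}$, I would instead complement an appropriate boundary letter (in the spirit of Lemma~\ref{EarmarkedToExtremal}), using Lemma~\ref{MiddleExtensions} to limit what one must check for overlap-freeness after such a modification.

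The main obstacle will be handling all three nonzero residues mod $4$ uniformly. The positions of the relevant length-$4$ blocks in $\mathbf{t}$ are rigidly constrained by the $\mu$-structure, so showing that their pairwise differences hit every required value of $n - 4$ is the crux; I expect this to require separate constructions or separate subcases for the residues $1, 2, 3 \pmod 4$, together with a short computer-assisted or $\mu$-based induction to extend the base cases into infinite families.
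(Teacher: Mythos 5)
There is a genuine gap, and it sits exactly where the lemma is hardest. Your main construction --- locating a length-$n$ factor of $\mathbf{t}$ that begins with \texttt{0010} or \texttt{1101} and ends with \texttt{0100} --- cannot work when $n$ is odd, i.e., for the residues $1$ and $3 \pmod 4$, which are half of the cases the lemma must cover. The reason is a parity obstruction: writing $\mathbf{t}$ as a product of the blocks $\mu(\texttt{0})=\texttt{01}$ and $\mu(\texttt{1})=\texttt{10}$, none of \texttt{0010}, \texttt{1101}, \texttt{0100} can begin at an even position (each placement would force some block $\mu(\mathbf{t}[k])$ to equal \texttt{00} or \texttt{11}), so every occurrence of each of these three words in $\mathbf{t}$ starts at an odd index. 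If a factor $\mathbf{t}[i..i+n-1]$ has one of the first two words as a prefix and the third as a suffix, then both $i$ and $i+n-4$ must be odd, forcing $n$ to be even. Hence no odd-length factor of $\mathbf{t}$ is earmarked, and the ``claim to verify'' in your plan is simply false for odd $n$. (Your length-$10$ base case $\mathbf{t}[9..18]=\texttt{0010110100}$ is correct precisely because $10$ is even.)

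Your fallback --- ``complement an appropriate boundary letter'' --- is where all the work would have to happen, and it is not developed. Complementing the first or last letter destroys the required prefix or suffix, while complementing an interior letter near the boundary produces a word that is no longer a factor of $\mathbf{t}$, so overlap-freeness is no longer automatic and must be re-established uniformly in $n$. This is exactly the difficulty the paper's proof confronts: it takes a factor $u$ of $\mathbf{t}$ of length $n-4$ and appends the literal word \texttt{0100}, obtaining $u\texttt{0100}$, which in general is \emph{not} a factor of $\mathbf{t}$; the overlap-freeness of this concatenation (in particular across the junction, handled by the special clauses in the \texttt{overlap} predicate) and the set of lengths $n$ for which a suitable $u$ exists are then verified mechanically with \texttt{Walnut}, whose output automaton accepts precisely the integers $n\geq 10$ with $n\not\equiv 0\pmod 4$. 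To repair your argument you would need to replace the ``factor of $\mathbf{t}$'' ansatz by some such hybrid construction and supply the accompanying uniform overlap-freeness verification; the purely internal-to-$\mathbf{t}$ approach cannot reach the odd lengths.
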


\begin{proof}
We use the automatic theorem-proving software \tt{Walnut}~\cite{Walnut} to show that the Thue-Morse word $\mathbf{t}$ contains a factor $u$ of length $n-4$ such that the word $u\tt{0100}$ is earmarked.  The interested reader can verify our results in \tt{Walnut}; the complete code that we used can be found in Appendix~\ref{Appendix}.  We essentially adapt the predicates used by Clokie, Gabric, and Shallit~\cite[Theorem 1]{ClokieGabricShallit2019}.

\begin{figure}[htb]
\centering
\includegraphics[scale=0.4]{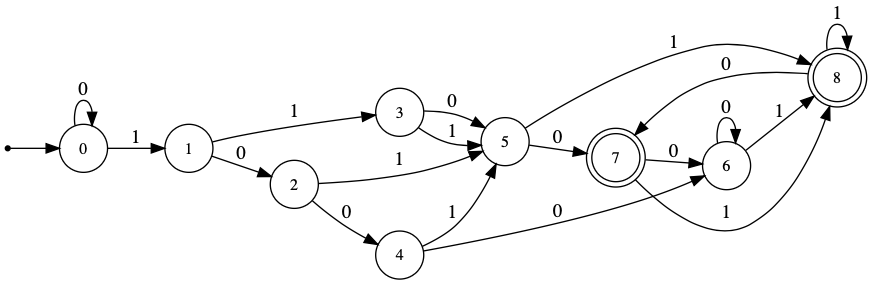}
\caption{The automaton accepting those $(n)_2$ for which the Thue-Morse word contains a factor $v$ of length $n-4$  such that $v\tt{0100}$ is earmarked.}
\label{TMEarmarkedAutomaton}
\end{figure}

First, we create a predicate $\tt{overlap}(i,n,p,s)$ which evaluates to \tt{true} if the word $u\tt{0100}$ contains an overlap of period $p$ with $p\geq 1$ beginning at index $i-s$, where $u=\mathbf{t}[s..s+n-5]$.  We use a straightforward modification of the method described by Clokie, Gabric, and Shallit~\cite[Proof of Theorem~1]{ClokieGabricShallit2019} to do so.  Next, we create a predicate $\tt{earmarked}(n,s)$ which evaluates to \tt{true} if the word $u\tt{0100}$ defined above is earmarked:
\begin{align*}
(&n\geq 8) \land (\mathbf{t}[s..s+3]\in\{\tt{0010},\tt{1101}\})\\ 
&\land (\forall i,p\ ((p\geq 1) \land (i\geq s) \land (i-s+2p <n)) \Rightarrow \lnot(\tt{overlap}(i,n,p,s)))
\end{align*}
Finally, the predicate
\[
\tt{testEarmarked}(n):=\exists s \ \tt{earmarked}(n,s)
\]
evaluates to \emph{true} if there is some length $n-4$ factor $v$ of the Thue-Morse word such that $v\tt{0100}$ is earmarked.  The automaton for $\tt{testEarmarked}(n)$ is shown in Figure~\ref{TMEarmarkedAutomaton}.  By inspection, this automaton accepts all integers $n\geq 10$ such that $n\not\equiv 0\pmod{4}$.
\end{proof}

\begin{lemma}\label{AllEarmarked}
Let $n\geq 10$ be an integer that is not a power of two.  Then there is an earmarked word of length $n$.
\end{lemma}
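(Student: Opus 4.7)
The plan is to reduce to Lemma~\ref{TMEarmarked} by iterating Lemma~\ref{EarmarkedToExtremal}. Write $n=2^{k}m$ with $m$ odd; since $n$ is not a power of two, $m\geq 3$, and the argument then splits according to $k$ and $m$.

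If $k\leq 1$, then $n\in\{m,2m\}$ satisfies $n\not\equiv 0\pmod 4$, and together with $n\geq 10$ this is exactly the hypothesis of Lemma~\ref{TMEarmarked}, which supplies an earmarked word of length $n$ outright. If $k\geq 2$ and $m\geq 5$, then $2m\geq 10$ and $2m\equiv 2\pmod 4$, so Lemma~\ref{TMEarmarked} produces an earmarked word of length $2m$; applying Lemma~\ref{EarmarkedToExtremal} iteratively $k-1$ times doubles the length at each step (and its length hypothesis $\geq 8$ holds throughout, since we start at $2m\geq 10$), yielding an earmarked word of length $2^{k}m=n$.

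The remaining, and only delicate, case is $k\geq 2$ with $m=3$, i.e., $n=3\cdot 2^{k}$ for $k\geq 2$. Here Lemma~\ref{TMEarmarked} cannot serve as a base: $2m=6$ is too short, and $4m=12\equiv 0\pmod 4$ lies outside its range. My plan is to exhibit a single earmarked word of length $12$ by hand; for example, $\tt{001001100100}$ visibly has prefix $\tt{0010}$ and suffix $\tt{0100}$, and a finite check over periods $p\leq 5$ (the only periods fitting an overlap into a word of length $12$) confirms overlap-freeness. Then applying Lemma~\ref{EarmarkedToExtremal} a further $k-2$ times produces an earmarked word of length $12\cdot 2^{k-2}=n$.

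The main obstacle is precisely this base case at length $12$: the Thue-Morse construction of Lemma~\ref{TMEarmarked} misses it, so a length-$12$ earmarked word must be produced independently, either by hand as above or by a short auxiliary \tt{Walnut} check over all overlap-free binary words of length $12$. Once that single word is secured, the rest of the argument is a clean induction via Lemma~\ref{EarmarkedToExtremal}.
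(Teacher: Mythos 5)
Your proposal is correct and follows essentially the same route as the paper: reduce to Lemma~\ref{TMEarmarked} for lengths $\not\equiv 0\pmod 4$ and iterate the doubling Lemma~\ref{EarmarkedToExtremal}, with an explicit earmarked word of length $12$ (the same word $\tt{001001100100}$ the paper uses) as the one unavoidable base case. Your bookkeeping via the odd part of $n$ is slightly tidier than the paper's induction on the dyadic interval containing $n$, which lists additional (strictly speaking unnecessary) base words at lengths $20$, $24$, and $28$.
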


\begin{proof}
By Lemma~\ref{TMEarmarked}, we may assume that $n\equiv 0\pmod{4}$.  Since $n$ is not a power of two, we may write $2^k<n<2^{k+1}$ for some $k\geq 3$.  We proceed by induction on $k$.  If $k\leq 4$, then $n\in\{12,20,24,28\}$.  It is easily verified by computer that the following words (found by computer search) are earmarked:
\begin{align*}
\text{Length 12: } &\tt{001001100100}\\
\text{Length 20: } &\tt{00100110100101100100}\\
\text{Length 24: } &\tt{110110010110100101100100}\\
\text{Length 28: } &\tt{1101100110100101101001100100}
\end{align*}
So we may assume that $k\geq 5$.  Let $m=n/2$.  Note that $m$ is not a power of two, and that $10<2^{k-1}<m<2^k$.  If $m\not \equiv 0\pmod{4}$, then there is an earmarked word of length $m$ by Lemma~\ref{TMEarmarked}.  If $m\equiv 0\pmod{4}$, then there is an earmarked word of length $m$ by the induction hypothesis.  So either way, there is an earmarked word of length $m$.  By Lemma~\ref{EarmarkedToExtremal}, there is an earmarked word of length $2m=n$.
\end{proof}

\begin{corollary}\label{NotAPowerOfTwo}
Let $n\geq 20$ be an even integer that is not a power of two.  Then there is an extremal overlap-free word of length $n$.
\end{corollary}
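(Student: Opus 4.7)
The plan is to deduce this corollary immediately from the two preceding lemmas, by halving $n$ and applying the doubling construction. Given an even $n \geq 20$ that is not a power of two, I would set $m = n/2$ and verify that $m$ satisfies the hypotheses of Lemma~\ref{AllEarmarked}. Since $n \geq 20$, we have $m \geq 10$. Moreover, if $m$ were a power of two, say $m = 2^k$, then $n = 2^{k+1}$ would also be a power of two, contradicting our assumption on $n$. Hence $m$ is an integer at least $10$ which is not a power of two.

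Next I would invoke Lemma~\ref{AllEarmarked} to obtain an earmarked word $u$ of length $m$. Since $m \geq 10 \geq 8$, Lemma~\ref{EarmarkedToExtremal} applies to $u$: the word $w$ obtained from $\mu(u)$ by complementing the first and last letters is extremal overlap-free. As $|w| = 2|u| = 2m = n$, this produces an extremal overlap-free word of length $n$, completing the proof.

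There is no real obstacle here; the corollary is essentially a one-line combination of Lemmas~\ref{AllEarmarked} and~\ref{EarmarkedToExtremal}, with the only genuine content being the observation that halving an even non-power-of-two preserves the non-power-of-two property. All the hard work has already been carried out in the earlier lemmas (the \texttt{Walnut} argument handling the case $n \not\equiv 0 \pmod 4$ and the induction on $k$ handling $n \equiv 0 \pmod 4$).
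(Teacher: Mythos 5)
Your proposal is correct and matches the paper's proof exactly: both halve $n$, apply Lemma~\ref{AllEarmarked} to get an earmarked word of length $m=n/2$, and then apply Lemma~\ref{EarmarkedToExtremal}. You are slightly more careful than the paper in explicitly checking that $m\geq 10$ and that $m$ is not a power of two, but this is the same argument.
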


\begin{proof}
By Lemma~\ref{AllEarmarked}, there is an earmarked word $u$ of length $m=n/2$.  By Lemma~\ref{EarmarkedToExtremal}, the word $w$ of length $n$ obtained from $\mu(u)$ by complementing the first and last letters is extremal overlap-free.
\end{proof}

\begin{lemma}\label{PowerOfTwo}
For every integer $k\geq 5$, there is an extremal overlap-free word of length $2^k$.
\end{lemma}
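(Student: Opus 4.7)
The plan is to bootstrap from a single seed: find an earmarked word of length $16$ and then repeatedly apply the doubling construction of Lemma~\ref{EarmarkedToExtremal}. The key point is that Lemma~\ref{EarmarkedToExtremal}, applied to any earmarked word of length at least $8$, outputs a word that is simultaneously earmarked and extremal overlap-free, and that has twice the input length. So a single earmarked seed of length $16$ generates, by iteration, an earmarked (hence extremal overlap-free) word of length $2^k$ for every $k\geq 5$.

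The first step is to exhibit an earmarked word $u_0$ of length $16$. This case is covered by neither Lemma~\ref{TMEarmarked} (which requires $n\not\equiv 0\pmod{4}$) nor Lemma~\ref{AllEarmarked} (which excludes powers of two), so one must establish it by direct computation; once the prefix (either $\tt{0010}$ or $\tt{1101}$) and the suffix $\tt{0100}$ are fixed, only $2^{8}=256$ candidate middles remain, and checking overlap-freeness takes negligible time.

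Next, I would define the sequence $(u_j)_{j\geq 0}$ recursively by taking $u_{j+1}$ to be the word obtained from $u_j$ by applying $\mu$ and then complementing the first and last letters. Since $|u_j|\geq 16\geq 8$ at every step, Lemma~\ref{EarmarkedToExtremal} applies at each stage, yielding that $u_{j+1}$ is earmarked, extremal overlap-free, and of length $2|u_j|$. A trivial induction gives $|u_j|=2^{j+4}$; in particular, for each $k\geq 5$ the word $u_{k-4}$ has length $2^k$ and is extremal overlap-free.

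The only substantive obstacle is the base case, i.e., actually producing an earmarked word of length $16$. Since it reduces to a finite search over at most $256$ candidates, this is a routine computation rather than a theoretical difficulty. Everything else is a straightforward iteration of the same doubling argument already used in the inductive step of Lemma~\ref{AllEarmarked}.
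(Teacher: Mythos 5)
Your reduction to Lemma~\ref{EarmarkedToExtremal} is sound \emph{if} the seed exists, but the base case fails: there is no earmarked word of length $16$, so the induction never gets started. You can see this without a computer via Theorem~\ref{res-sal}. An earmarked word $w$ of length $16$ ends in $\tt{0100}$ and begins with $\tt{0010}$ or $\tt{1101}$, which forces its factorization $w=u\mu(y)v$ to have $(u,v)\in\{(\tt{0},\tt{0}),(\tt{00},\tt{00}),(\tt{1},\tt{0}),(\tt{11},\tt{00})\}$ with $y$ overlap-free of length $7$ or $6$ and with forced short prefixes and suffixes. Working through the four cases: with $(u,v)=(\tt{1},\tt{0})$ or $(\tt{00},\tt{00})$ no admissible $y$ survives (one is driven to $\tt{111}$, $\tt{10101}$, or $\tt{000}$ inside $y$); with $(u,v)=(\tt{0},\tt{0})$ the only candidates $y=\tt{0010011}$ and $y=\tt{0011011}$ produce words containing the overlaps $(\tt{001011})^2\tt{0}$ and $(\tt{011010})^2\tt{0}$ respectively; and with $(u,v)=(\tt{11},\tt{00})$ the unique candidate $y=\tt{010010}$ gives $w=\tt{1101100101100100}$, which contains the overlap $(\tt{101100})^2\tt{1}$ starting at its second letter. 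So the ``routine computation'' you defer to actually returns the empty set, and your proof collapses. This is exactly why Lemma~\ref{AllEarmarked} is stated only for lengths that are \emph{not} powers of two.

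The paper instead handles $2^k$ by a different construction: it sets $s=\tt{0}\mu(\tt{0011001})\tt{1}=(\tt{00101101})^2$ (note $s$ is not earmarked --- its suffix is $\tt{1101}$) and proves directly that $\mu^{\ell}(s)$ is extremal overlap-free for all $\ell\ge 1$, using Lemma~\ref{MiddleExtensions} for interior insertions and the fact that $\mu^{\ell}(s)$ begins with the two squares $\mu^{\ell}(\tt{00})$ and $\mu^{\ell}(s)$ ending in opposite letters (and symmetrically at the right end) to kill insertions near the boundary. If you want to salvage a proof along your lines you would need a different doubling-stable class of seeds for the power-of-two lengths; the earmarked class cannot supply one.
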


\begin{proof}
Let $s=\tt{0}\mu(\tt{0011001})\tt{1}=(\tt{00101101})^2$.
We claim that the word $\mu^\ell(s)$ is extremal overlap-free for every integer $\ell\geq 1$.  Since $s$ has length $16$, the word $\mu^\ell(s)$ has length $2^{\ell+4}$, and hence the theorem statement follows.

Fix $\ell\geq 1$, and let $w=\mu^\ell(s)$.  If $\ell\leq 2$, then we verify that $w$ is extremal overlap-free by computer, so we may assume that $\ell\geq 3$.  First note that $s$ is overlap-free, and hence $w$ is overlap-free.  It remains to show that every extension of $w$ contains an overlap.  Consider an extension $w'aw''$ of $w$, where $w=w'w''$ and $a\in\Sigma_2$.  By Lemma~\ref{MiddleExtensions}, we may assume that $|w'|\leq 4$ or $|w''|\leq 4$.  We consider several cases.

\textbf{Case I:} $|w'|=0$.  Note that $w$ begins with the squares $\mu^\ell(00)$ and $\mu^\ell(s)$.  If $\ell$ is even, then $\mu^\ell(00)$ ends with a $\tt{0}$, and $\mu^\ell(s)$ ends with a $\tt{1}$.  If $\ell$ is odd, then $\mu^\ell(00)$ ends with a $\tt{1}$, and $\mu^\ell(s)$ ends with a $\tt{0}$.  So either way, the extensions $\tt{0}w$ and $\tt{1}w$ both contain an overlap.

\textbf{Case II:} $1\leq |w'|\leq 4$.  Since $\ell\geq 3$, we see that $w$ has prefix $\mu^3(\tt{0})=\tt{01101001}$.
If $|w'|=1$, then the extension $w'\tt{0}w''=\tt{0}w$ contains an overlap by Case I, and the extension $w'\tt{1}w''$ contains the overlap $\tt{111}$.
So we may assume that $2\leq |w'|\leq 4$.  By inspection, the extension $w'aw''$ contains an overlap of period at most $3$.

\textbf{Case III:} $|w''|=0$.  Note that $\mu^\ell(s)$ and $\mu^\ell(\tt{101101})$ are square suffixes of $\mu^\ell(s)$ which begin in $\tt{0}$ and $\tt{1}$, respectively.  So both of the extensions $w\tt{0}$ and $w\tt{1}$ contain an overlap.

\textbf{Case IV:} $1\leq |w''|\leq 4$.  Since $\ell\geq 3$, we see that $w$ has suffix $\mu^3(\tt{0})=\tt{01101001}$ if $\ell$ is even, and suffix $\mu^3(\tt{1})=\tt{10010110}$ if $\ell$ is odd.  Either way, the remainder of the proof is similar to that of Case II.
\end{proof}

\begin{proposition}\label{EvenLengths}
Let $n$ be a nonnegative even number.  Then there is an extremal overlap-free word of length $n$ over the alphabet $\Sigma_2$ if and only if $n\in\mathcal{N}$.
\end{proposition}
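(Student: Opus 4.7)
The plan is to prove both directions of the biconditional; the heavy combinatorial work is already packaged in Corollary~\ref{NotAPowerOfTwo} and Lemma~\ref{PowerOfTwo}, so what remains is to organize the cases and dispose of a short list of small lengths by direct inspection.

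For the forward direction, first observe that the even elements of $\mathcal{N}$ are exactly $\{10,12\}$ together with every even integer $n\geq 20$. The case $n=10$ is immediate from the introduction, where the word $\tt{0010011011}$ is already identified as an extremal overlap-free word of length $10$. For $n=12$ the plan is to exhibit an explicit extremal overlap-free word (found by a short computer search) and verify extremality by checking each of its thirteen possible single-letter extensions. For even $n\geq 20$, split on whether $n$ is a power of two: if it is not, apply Corollary~\ref{NotAPowerOfTwo} directly; otherwise $n=2^k$ with $k\geq 5$, and Lemma~\ref{PowerOfTwo} applies.

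For the reverse direction, the even integers that do not lie in $\mathcal{N}$ are precisely $\{0,2,4,6,8,14,16,18\}$. For each of these values I would verify by exhaustive search that no extremal overlap-free binary word of that length exists: enumerate every overlap-free word $w$ of the given length and confirm that at least one extension of $w$ is still overlap-free. By the classical polynomial bound on overlap-free binary words, there are only a few thousand such words of length at most $18$, so this is a routine finite computation.

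The main obstacle is bookkeeping rather than mathematics. One has to correctly identify the three-point gap $\{14,16,18\}$ between $12$ and $20$ among the admissible even lengths, and then perform the finite computer checks at the eight excluded values, since no uniform structural argument rules them out. Everything else follows mechanically from the lemmas established earlier in the section.
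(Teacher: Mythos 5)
Your proposal is correct and matches the paper's proof essentially verbatim: the same case split (lengths $10$ and $12$ by explicit words, even $n\geq 20$ via Corollary~\ref{NotAPowerOfTwo} or Lemma~\ref{PowerOfTwo} according to whether $n$ is a power of two, and exhaustive search to exclude $\{0,2,4,6,8,14,16,18\}$). No differences worth noting.
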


\begin{proof}
If $n\in\{0,2,4,6,8,14,16,18\}$, then an exhaustive backtracking search shows that no extremal overlap-free word of length $n$ exists over $\Sigma_2$.  The words $\tt{0010011011}$ and $\tt{001001100100}$, of lengths $10$ and $12$, respectively, are extremal overlap-free.
So suppose that $n\geq 20$. If $n$ is a power of two, then there is an extremal overlap-free word of length $n$ by Corollary~\ref{PowerOfTwo}.  If $n$ is not a power of two, then there is an extremal overlap-free word of length $n$ by Lemma~\ref{NotAPowerOfTwo}.
\end{proof}

\section{Extremal overlap-free words of odd length}\label{OddSection}

In this section, we characterize the odd lengths for which there are
extremal overlap-free binary words.  We need two classical results
from the theory of overlap-free binary words.  The first is the
so-called \emph{factorization theorem} of Restivo and Salemi
\cite{RestivoSalemi1985} (see also
\cite[Proposition~1.7.5(a)]{AlloucheShallit}).

\begin{theorem}
Let $x \in \lbrace \mathtt{0,1} \rbrace^*$ be overlap-free.  Then there exist
$u, v \in \lbrace \varepsilon, \mathtt{0, 1, 00, 11} \rbrace$ and an
overlap-free word $y$ such that $x = u \mu(y) v$.  Furthermore, this
factorization is unique if $|x| \geq 7$.
\label{res-sal}
\end{theorem}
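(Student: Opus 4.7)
The plan is to prove existence by analyzing the block structure of $x$, and then to deduce uniqueness for $|x|\geq 7$ by using the positions of the doubled letters $\mathtt{00}$ and $\mathtt{11}$ to pin down the phase of the $\mu$-decomposition.

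For existence, the key combinatorial fact about $\mu$ is that for any $z\in\Sigma_2^*$, each pair of letters of $\mu(z)$ at positions $(2i-1,2i)$ lies in $\{\mathtt{01},\mathtt{10}\}$, so the factors $\mathtt{00}$ and $\mathtt{11}$ in $\mu(z)$ can only occur at positions $(2i,2i+1)$, straddling the boundary between the images of two consecutive letters of $z$. On the other side, since $x$ is overlap-free it cannot contain $\mathtt{000}$ or $\mathtt{111}$, so it decomposes into maximal runs of length $1$ or $2$. Base cases with $|x|\leq 1$ are immediate, taking $y=\varepsilon$. In general, I would look at the first one or two letters of $x$ and pick $u\in\{\varepsilon,\mathtt{0},\mathtt{1},\mathtt{00},\mathtt{11}\}$ so that the remaining suffix starts aligned with the ``internal'' positions of a $\mu$-image (that is, so that every subsequent doubled block starts at an even offset), and symmetrically pick $v$ from the end of $x$. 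The portion between $u$ and $v$ then has even length and factors into blocks from $\{\mathtt{01},\mathtt{10}\}$; replacing each $\mathtt{01}$ by $\mathtt{0}$ and each $\mathtt{10}$ by $\mathtt{1}$ yields a candidate word $y$. To see that $y$ is overlap-free, one observes that an overlap $azaza$ in $y$ would place the factor $\mu(a)\mu(z)\mu(a)\mu(z)\mu(a)$ inside $\mu(y)\subseteq x$, and this factor itself contains an overlap (checking the two cases $a=\mathtt{0}$ and $a=\mathtt{1}$ separately), contradicting the overlap-freeness of $x$.

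For uniqueness when $|x|\geq 7$, I would first note that any overlap-free binary word of length at least $5$ contains some doubled letter $\mathtt{00}$ or $\mathtt{11}$, since both $\mathtt{01010}$ and $\mathtt{10101}$ are themselves overlaps (of the form $axaxa$ with $|x|=1$) and hence forbidden as factors. Given two putative decompositions $x=u_1\mu(y_1)v_1=u_2\mu(y_2)v_2$, the location in $x$ of an ``internal'' doubled letter, one not contained in any $u_i$ or $v_i$, pins down the parity of $|u_i|$, and the bound $|x|\geq 7$ is what guarantees that such an internal doubled letter is available in both decompositions simultaneously. It then remains to rule out equal-parity pairs $(u_1,u_2)$ that differ, such as $u_1=\varepsilon$ against $u_2=\mathtt{00}$, or $u_1=\mathtt{0}$ against $u_2=\mathtt{1}$; I would handle this by a finite case analysis that compares the first few letters of the two expansions and extracts a forbidden factor ($\mathtt{000}$, $\mathtt{111}$, or a short overlap) unless the two factorizations coincide. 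Uniqueness of $v$ follows by the symmetric argument from the right end of $x$, and $y$ is then uniquely recovered by applying $\mu^{-1}$ to the middle portion. The threshold $7$ is sharp: for shorter words multiple decompositions genuinely exist, for example $\mathtt{010}=\mu(\mathtt{0})\cdot\mathtt{0}=\mathtt{0}\cdot\mu(\mathtt{1})$ and $\mathtt{110100}=\mathtt{1}\cdot\mu(\mathtt{11})\cdot\mathtt{0}=\mathtt{11}\cdot\mu(\mathtt{0})\cdot\mathtt{00}$.

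The main obstacle I anticipate is the bookkeeping in the uniqueness step: the finite case analysis must exactly match the bound $|x|\geq 7$, which requires careful attention to the ``boundary'' cases where a doubled letter might belong to $u_i$ or $v_i$ in one decomposition but be internal to $\mu(y_i)$ in the other. The existence side is essentially forced once the correct phase is chosen from the first and last few letters, and verifying overlap-freeness of $y$ is a short structural check using that $\mu$ is an injective, exponent-respecting morphism.
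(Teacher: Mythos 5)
The paper does not prove this statement: it is quoted as the classical factorization theorem of Restivo and Salemi, with citations to their paper and to Allouche--Shallit, so there is no in-paper proof to compare yours against. Judged on its own terms, your plan follows the standard route (locate the doubled letters, use them to fix the phase of the $\mu$-decomposition, peel off $u$ and $v$ at the ends), and several ingredients are correct: doubled letters in $\mu(y)$ can only straddle block boundaries, overlap-freeness forbids $\mathtt{000}$, $\mathtt{111}$, $\mathtt{01010}$ and $\mathtt{10101}$, overlap-freeness of $y$ pulls back correctly through $\mu$, and your length-$6$ example $\mathtt{110100}$ showing the threshold $7$ is sharp is right.

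The genuine gap is in the existence step. You write that one should ``pick $u$ so that every subsequent doubled block starts at an even offset,'' but you never show that such a choice is possible; that is precisely the content of the theorem. Concretely, you must prove that any two occurrences of doubled letters not within distance $2$ of an end of $x$ begin at positions of the same parity. This requires an argument: if two consecutive doubled occurrences begin at positions $i<j$ with $j-i$ odd, the letters strictly between them alternate, so $j-i\geq 5$ yields the forbidden factor $\mathtt{01010}$ or $\mathtt{10101}$, $j-i=1$ yields $\mathtt{000}$ or $\mathtt{111}$, and $j-i=3$ yields $\mathtt{00100}$ or $\mathtt{11011}$; this last configuration is overlap-free in isolation, so one must further show it cannot be extended by two letters on each side inside an overlap-free word (the only candidate extension of $\mathtt{00100}$ is $\mathtt{110010011}$, which contains the overlap $\mathtt{1001001}$), forcing the phase defect to sit at an end of $x$ where it is absorbed into $u$ or $v$. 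Without this lemma, the claim that the middle portion ``factors into blocks from $\{\mathtt{01},\mathtt{10}\}$'' is circular. A similar quantitative gap sits in the uniqueness step: for $|x|=7$ the portion of $x$ guaranteed to be interior to both factorizations occupies only positions $3$ through $5$ and need not contain a doubled letter (e.g.\ if $x_3x_4x_5=\mathtt{010}$), so pinning down the parity of $|u_i|$ takes more care than ``an internal doubled letter is available in both decompositions.'' You flag this bookkeeping as an obstacle, but it is where the proof actually lives.
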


Words $u$ and $v$ are \emph{conjugates} if there exist words $x$ and
$y$ such that $u=xy$ and $v=yx$, i.e., if they are cyclic shifts of
one another.  Let $w\in\Sigma^*$.  The \emph{circular word} formed
from $w$ is the set of all conjugates of $w$.  Thue~\cite[Proposition~2.13 (Satz~13)]{Berstel1995} characterized the
circular overlap-free binary words, which also yields a
characterization of the overlap-free binary squares (see also the work
of Shelton and Soni \cite{SheltonSoni1985}).

Define \[A = \{\mathtt{00,11,010010,101101}\}\] and
\[\mathcal{A} = \bigcup_{k \geq 0}\mu^k(A).\]

\begin{theorem}\label{ovlpf-sqs}
The overlap-free binary squares are the conjugates of the
words in $\mathcal{A}$.
\end{theorem}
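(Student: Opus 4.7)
The plan is to show the set equality by proving two inclusions.

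For the inclusion that every conjugate of a word in $\mathcal{A}$ is an overlap-free binary square, I would first verify by direct inspection that each of the four words in $A$ is an overlap-free square whose conjugates are all overlap-free (only a handful of short words to check). Using that $\mu$ preserves overlap-freeness and that $\mu(xx)=\mu(x)\mu(x)$ sends squares to squares, an induction on $k$ shows that every element of $\mu^k(A)$ is an overlap-free square. To handle conjugates, I would observe that any cyclic shift of a length-$2n$ word of period $n$ is again a word of period $n$, hence again a square. For overlap-freeness of the conjugates, I would use the fact that a cyclic shift by one position of $u$ induces a cyclic shift by two positions of $\mu(u)$: the even-indexed conjugates of $\mu^k(w)$ are $\mu$-images of conjugates of $\mu^{k-1}(w)$, hence overlap-free by induction, and the odd-indexed conjugates can be handled by analyzing the single $\mu$-block straddled at the rotation point, together with Thue's classification of circular overlap-free binary words.

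The converse inclusion, that every overlap-free binary square is a conjugate of some word in $\mathcal{A}$, is the substantive content. I would argue by strong induction on $|w|$, writing $w=yy$. The base case $|w|\leq 6$ is handled by explicit enumeration: the overlap-free binary squares of length at most $6$ are $\tt{00}$, $\tt{11}$, $\tt{0101}$, $\tt{1010}$, $\tt{001001}$, $\tt{010010}$, $\tt{100100}$, and $\tt{101101}$, and each of these is either in $A$, equal to $\mu$ of a word in $A$, or a conjugate of $\tt{010010}$. For the inductive step with $|w|\geq 8$, I would apply the Restivo--Salemi factorization (Theorem~\ref{res-sal}) to write $w=u\mu(z)v$ uniquely with $u,v\in\{\varepsilon,\tt{0},\tt{1},\tt{00},\tt{11}\}$ and $z$ overlap-free, and then exploit the periodicity $w=yy$ to deduce that a suitable cyclic shift of $w$ has the form $\mu(z')$ with $z'$ an overlap-free square of length $|w|/2$. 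When $|y|$ is even, the two halves $y$ of $w$ align with the $\mu$-block decomposition, which (via the uniqueness clause of Restivo--Salemi) forces $u=v=\varepsilon$ and $z$ itself to be a square $z'z'$. When $|y|$ is odd, the halves straddle $\mu$-blocks, and I would cyclically shift $w$ by a single position to reduce to the even case, again using the compatibility of $\mu$ with cyclic shift noted above. The induction hypothesis applied to $z'$ gives that $z'$ is a conjugate of some $a\in\mathcal{A}$, so $\mu(z')$ is a conjugate of $\mu(a)\in\mathcal{A}$, and hence so is $w$.

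The main obstacle I anticipate is the case analysis in the inductive step, particularly when the boundary pieces $u$ or $v$ in the Restivo--Salemi factorization are nonempty. Ruling out these cases, or reducing them to the $u=v=\varepsilon$ situation via a cyclic shift, requires a careful use of the uniqueness in Theorem~\ref{res-sal} together with the parity constraint $|w|=|u|+2|z|+|v|$. A subtle bookkeeping point is that each time one shifts cyclically, one must verify that the resulting conjugate is itself overlap-free, so that the induction hypothesis can be legitimately applied; this verification ultimately relies on the conjugation-closure of the family of overlap-free squares (which is part of the easy inclusion). These checks are where most of the technical work lies, and where the classical arguments of Thue and of Shelton--Soni concentrate their effort.
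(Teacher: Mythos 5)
First, a point of reference: the paper does not prove Theorem~\ref{ovlpf-sqs} at all; it imports it as a classical result of Thue (the characterization of circular overlap-free binary words, Satz~13) together with Shelton and Soni. So your proposal has to stand on its own, and while the easy inclusion and the overall induction-via-Restivo--Salemi strategy are reasonable in outline, the inductive step as you describe it fails.

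The concrete gap is your dichotomy on the parity of $|y|$, which is wrong in both branches. In the even branch, it is not true that $|y|$ even forces $u=v=\varepsilon$ in the factorization $w=u\mu(z)v$. Counterexample: $w=\tt{11001100}=(\tt{1100})^2$ is overlap-free, $|y|=4$ is even, and its unique Restivo--Salemi factorization (uniqueness applies since $|w|=8\geq 7$) is $w=\tt{1}\,\mu(\tt{101})\,\tt{0}$, with $u=\tt{1}$ and $v=\tt{0}$ nonempty and $z=\tt{101}$ not a square. The correct dichotomy is the one the paper records as Remark~\ref{sq_types} --- $vv=\mu(zz)$ versus $vv=\overline{a}\mu(z)a$ --- but that remark is \emph{deduced from} Theorem~\ref{ovlpf-sqs}, so you cannot invoke it here; you would have to derive it directly from Theorem~\ref{res-sal} by an honest case analysis of how $u,v\in\{\varepsilon,\tt{0},\tt{1},\tt{00},\tt{11}\}$ interact with the periodicity of $yy$, and that analysis is precisely the technical core your sketch skips. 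In the odd branch, ``cyclically shift by one position to reduce to the even case'' cannot work: conjugation sends $(y_1y_2)^2$ to $(y_2y_1)^2$ and preserves the period $|y|$, so the parity of $|y|$ never changes under any shift. Indeed, since the words in $\mathcal{A}$ have half-lengths $2^k$ or $3\cdot 2^k$, the theorem asserts that for odd $|y|\geq 5$ there are \emph{no} overlap-free squares at all, so that branch of your induction must terminate in a contradiction rather than a reduction, and your sketch provides no mechanism for producing one. A smaller slip: your base-case enumeration omits the overlap-free squares $\tt{011011}$ and $\tt{110110}$ (conjugates of $\tt{101101}$); this one is harmless but symptomatic of the level of care the case analysis requires.
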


\begin{remark}\label{sq_types}
From Theorems~\ref{res-sal} and \ref{ovlpf-sqs}, we deduce that if $vv$
is an overlap-free binary square of length greater than $6$, then $vv$
can be written in exactly one of the following two forms: $vv =
\mu(zz)$ or $vv = \overline{a}\mu(z)a$ for some $a \in \{\mathtt{0,1}\}$ and
some $z \in \{\mathtt{0,1}\}^*$.
\end{remark}

\begin{proposition}\label{OddLengthExistence}
  Let $u$ be an extremal overlap-free binary word of odd length.  Then
  either $|u| = 2^{k}+1$ or $|u| = 3\cdot 2^k+1$ for some $k$.
\end{proposition}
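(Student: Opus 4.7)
The plan is to proceed by strong induction on $|u|$, using the Restivo--Salemi factorization (Theorem~\ref{res-sal}) for the descent. For the base of the induction, I would handle all odd $|u|<25$ by exhaustive computer search, verifying that no extremal overlap-free binary word of such length exists; for each such $|u|$ the proposition is then vacuously true.

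For the inductive step, assume $|u|\geq 25$. By Theorem~\ref{res-sal} there is a unique factorization $u=v\mu(y)w$ with $v,w\in\{\varepsilon,\tt{0},\tt{1},\tt{00},\tt{11}\}$ and $y$ overlap-free. Since $|u|$ is odd and $|\mu(y)|=2|y|$ is even, $|v|+|w|$ is odd, so $|v|+|w|\in\{1,3\}$, giving $|u|\in\{2|y|+1,\,2|y|+3\}$.

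The heart of the argument is to force $|y|\in\{2^{j}:j\geq 0\}\cup\{3\cdot 2^{j}:j\geq 0\}$. By Lemma~\ref{MiddleExtensions}, every interior single-letter extension of $u$ already contains a short-period overlap, so the substantive content of extremality is concentrated in the boundary extensions. A boundary extension that contains an overlap forces the corresponding suffix or prefix of $u$ to contain an overlap-free binary square; Theorem~\ref{ovlpf-sqs} together with the computation that $|\mu^{k}(\tt{00})|=|\mu^{k}(\tt{11})|=2^{k+1}$ and $|\mu^{k}(\tt{010010})|=|\mu^{k}(\tt{101101})|=3\cdot 2^{k+1}$ shows that every overlap-free binary square has length in $\{2^{j}\}\cup\{3\cdot 2^{j}\}$. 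I would then argue, using Remark~\ref{sq_types} to pull overlap-free squares in $u$ back through $\mu$ (an overlap-free square of length greater than $6$ has the form $\mu(zz)$ or $\overline{a}\mu(z)a$, so it aligns with the $\mu$-block structure up to a single-letter shift), that $y$ satisfies the same boundary-extremality property as $u$. Induction on $|y|$, combined with the analysis of even lengths already in Proposition~\ref{EvenLengths}, then yields $|y|\in\{2^{j}\}\cup\{3\cdot 2^{j}\}$. Together with $|u|\in\{2|y|+1,\,2|y|+3\}$ this gives $|u|-1\in\{2^{k},\,3\cdot 2^{k}\}$, as required.

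The main obstacle will be the pull-back step: inserting a single letter inside $\mu(y)$ breaks the block alignment and does not correspond to a single-letter insertion in $y$. Handling this requires case-analysis on the insertion position modulo $2$ and on the surrounding $\mu$-blocks, invoking Remark~\ref{sq_types} to classify the short-period overlap-free squares forced in each disrupted extension, and then ruling out the ``bad'' candidate lengths for $|y|$ (such as $10$, $12$, $20$, $22$, all of which lie in $\mathcal{N}$ but are not of the form $2^{j}$ or $3\cdot 2^{j}$) by exhibiting, for each, a letter-insertion in $u$ that remains overlap-free, contradicting extremality of $u$.
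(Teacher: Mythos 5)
Your overall strategy---descending through the Restivo--Salemi factorization by induction---is genuinely different from the paper's, but as outlined it has two gaps that I do not think can be repaired along the lines you describe. First, the concluding arithmetic fails: if $|v|+|w|=3$ then $|u|=2|y|+3$, and even granting $|y|\in\{2^{j}\}\cup\{3\cdot 2^{j}\}$ you would get, for instance, $|y|=4$ and $|u|=11$, for which $|u|-1=10$ is neither a power of two nor three times a power of two. So the case $|v|+|w|=3$ must be eliminated outright rather than fed into the same formula, and your outline gives no mechanism for doing so. Second, and more seriously, the inductive hypothesis you want for $y$ cannot be ``$y$ is extremal overlap-free'' (nor any boundary-extremality weakening of it), because by Proposition~\ref{EvenLengths} extremal overlap-free words exist at \emph{every} even length $n\geq 20$, not only at lengths $2^{j}$ and $3\cdot 2^{j}$. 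Knowing that $y$ inherits extremality therefore does not restrict $|y|$ to the set you need; the ``bad'' lengths you propose to rule out one by one form an infinite set, and you offer no uniform argument. The pull-back obstacle you flag (insertions inside $\mu(y)$ do not correspond to insertions in $y$) is real, but even a complete resolution of it would leave these two problems standing.

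The paper's proof takes a much more direct route worth comparing against. Extremality means both $u\mathtt{0}$ and $u\mathtt{1}$ end in overlaps, so $u$ ends in two distinct squares. Taking $vv$ to be the \emph{longest} square suffix of $u$, a short case analysis using Remark~\ref{sq_types} shows that $|vv|>6$ and $vv=\overline{a}\mu(z)a$; if this square sat properly inside $\mu(y)a$ it would be preceded by $a$ and create the overlap $avv$, so in fact $u=\overline{a}vv$. Then $|u|-1=|vv|$ is the length of an overlap-free binary square, which by Theorem~\ref{ovlpf-sqs} is $2^{k}$ or $3\cdot 2^{k}$. No induction, and no analysis of $y$, is needed.
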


\begin{proof}
  By Theorem~\ref{res-sal}, we can, without loss of generality,
  consider two possible forms for $u$: either $u = \mu(y)a$ or
  $u = bb\mu(y)a$ for some $a,b \in \{\mathtt{0,1}\}$.  If $u$ is extremal
  overlap-free, then both $ua$ and $u\overline{a}$ end in overlaps.
  Consequently, the word $u$ ends in at least two distinct squares.
  Let $vv$ be the longest square suffix of $u$.

  Suppose first that $|vv|>6$.  By Remark~\ref{sq_types}, we see that
  $vv = \overline{a}\mu(z)a$ for some word $z$.  If $vv$ is a proper
  factor of $\mu(y)a$, then $vv$ is preceded by $a$ in $u$; however,
  since $v$ ends with $a$, the word $avv$ is an overlap in $u$, which
  is a contradiction.  We conclude that
  $u = \overline{a}\overline{a}\mu(z)a=\overline{a}vv$, and hence that
  either $|u| = 2^{k}+1$ or $|u| = 3\cdot 2^k+1$ for some $k$, as
  required.

  Now consider the case $|vv| \leq 6$.  Since $u$ ends in two distinct
  squares, these squares are both conjugates of words in
  $A \cup \{\mathtt{0101}\}$, and, since one must be a suffix of the
  other, we observe that the only possibilities for these two squares
  are $aa$ and $\overline{a}aa\overline{a}aa$.  However,
  $\overline{a}aa\overline{a}aa$ is not a suffix of a word of either
  the form $\mu(y)a$ or the form $bb\mu(y)a$.  This contradiction
  completes the proof.
\end{proof}

The proof of Lemma~\ref{OddLengthExistence} tells us that any extremal overlap-free word of odd length can be obtained from an overlap-free square by adding a single letter at either the beginning or the end.  This led us to the constructions of overlap-free words of odd length given in the next two lemmas.

\begin{lemma}\label{PowerOfTwoPlusOne}
For every integer $k\geq 5$, there is an extremal overlap-free word of length $2^k+1$.
\end{lemma}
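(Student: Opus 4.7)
The plan is to explicitly construct, for each $k\ge 5$, a word $u_k$ of length $2^k+1$ and verify it is extremal overlap-free. Motivated by Proposition~\ref{OddLengthExistence}, which shows any such extremal word must have the form $\overline{a}\,\overline{a}\,\mu(z)\,a$ (with $|z|=2^{k-1}-1$, and with $z[1]=a$ so as to forbid the period-$1$ overlap $\overline{a}\,\overline{a}\,\overline{a}$ at the start), I build $u_k$ in this shape. Concretely, I take $V_k$ to be a cyclic conjugate of $\mu^{k-1}(\tt{0})$ chosen so that $V_kV_k=\overline{a}\,\mu(z_k)\,a$ with $z_k[1]=a$, and then set $u_k=\overline{a}\cdot V_kV_k$. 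By Theorem~\ref{ovlpf-sqs}, every overlap-free binary square of length $2^k\ge 32$ is such a conjugate, so valid rotations exist; for the base case, $V_5=\tt{0100110010110011}$ with $a=\tt{1}$ works, giving $u_5=\tt{0}\cdot V_5V_5$.

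To prove $u_k$ is overlap-free, observe that $V_kV_k$ is overlap-free by Theorem~\ref{ovlpf-sqs}, so any overlap in $u_k$ must touch position $1$. The condition $z_k[1]=a$ rules out the period-$1$ overlap $\overline{a}\,\overline{a}\,\overline{a}$, and longer overlaps anchored at position $1$ are eliminated using the structure of $V_k$ (in particular, the factorization $V_k[2..2^{k-1}]\cdot V_k[1..2^{k-1}-1]=\mu(z_k)$). To prove $u_k$ is extremal, I invoke Lemma~\ref{MiddleExtensions} to reduce to extensions within four positions of either end, and then verify by a finite case analysis on the leading and trailing few letters of $u_k$ (which are determined by $V_k$) that each such boundary extension produces an overlap---typically a short overlap of period at most $3$, or, in a few cases, an overlap inherited from the $\mu$-preimage.

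The main obstacle is producing the family $\{V_k\}_{k\ge 5}$ uniformly and carrying out the boundary check for every $k$. A promising route is recursion: given $V_kV_k$ of form $\overline{a_k}\,\mu(z_k)\,a_k$, define $V_{k+1}V_{k+1}$ as a carefully chosen cyclic rotation of $\mu(V_kV_k)=(\mu(V_k))^2$, so as to move the square out of the $\mu(yy)$-form (of Remark~\ref{sq_types}) and into the single-letter-boundary form $\overline{a_{k+1}}\,\mu(z_{k+1})\,a_{k+1}$ with $z_{k+1}[1]=a_{k+1}$. Explicit tracking of the first and last few letters of $V_k$ under this operation should show that the boundary pattern of $u_k$ stabilises after a finite transient in $k$, reducing the boundary extremality check to a finite verification independent of $k$. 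If this recursion proves awkward, an alternative (in the spirit of Lemma~\ref{TMEarmarked}) is a direct Walnut verification that appropriate $\mu$-related factors of $\mathbf{t}$, with boundary letters possibly complemented, yield extremal overlap-free words of length $2^k+1$ for every $k\ge 5$.
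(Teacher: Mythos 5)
Your strategy is the same as the paper's: take a suitable conjugate $V_k$ of $\mu^{k-1}(\tt{0})$, form the overlap-free square $V_kV_k$ (overlap-free by Theorem~\ref{ovlpf-sqs}), prepend one letter, and then check overlap-freeness and extremality by reducing everything to the first and last few positions via Lemma~\ref{MiddleExtensions}. Your base word $\tt{0}\cdot(\tt{0100110010110011})^2$ is in fact literally the paper's word for $k=5$.

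However, there is a genuine gap: you never actually exhibit the family $\{V_k\}_{k\ge 5}$, and without an explicit choice the ``finite case analysis on the leading and trailing few letters'' cannot be carried out, since those letters are not determined. Your appeal to Theorem~\ref{ovlpf-sqs} only tells you that every conjugate of $\mu^{k-1}(\tt{00})$ is an overlap-free square; it does not by itself produce a rotation of the form $\overline{a}\mu(z_k)a$ with $z_k[1]=a$, let alone one whose boundary letters are uniform in $k$. Both of your proposed repairs (a recursion on rotations of $\mu(V_kV_k)$, or a Walnut check) are plausible but left entirely unexecuted, and the recursion in particular would still require proving that the boundary pattern stabilises. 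The paper closes exactly this gap with a single explicit formula, $u=(\tt{011})^{-1}\mu^{k-1}(\tt{00})\tt{011}$ and $v=\tt{0}u$: this choice forces the length-$14$ prefix of $v$ to be $\tt{0}(\tt{011})^{-1}\mu^4(\tt{0})=\tt{00100110010110}$ for \emph{every} $k\ge 5$, and forces the suffix to be $\mu^4(\tt{0})\tt{011}$ or $\mu^4(\tt{1})\tt{011}$ according to the parity of $k$, so the boundary checks genuinely reduce to finitely many inspections independent of $k$ (together with the observation that, $u$ being a square starting with $\tt{0}$ and ending with the square $\tt{11}$, both right extensions $va$ automatically contain overlaps). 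If you adopt that explicit conjugate, your outline becomes a complete proof.
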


\begin{proof}
Fix $k\geq 5$.  Let $u=(\tt{011})^{-1}\mu^{k-1}(\tt{00})\tt{011}$.  Note that $u$ is a conjugate of $\mu^{k-1}(\tt{00})$.  In particular, we have that $u$ is a square of length $2^k$, and by Theorem~\ref{ovlpf-sqs}, we see that $u$ is overlap-free.  We claim that the word $v=\tt{0}u$
is extremal overlap-free.  We first show that $v$ is overlap-free.  Since $u$ is overlap-free, it suffices to show that no prefix of $v$ is an overlap.  Since $v$ has prefix $\tt{00100}$, which never appears again in $v$, it suffices to check that $u$ does not begin with an overlap of period at most $4$, which is easily done by inspection.

It remains to show that every extension of $v$ contains an overlap.  Consider an extension $v'av''$ of $v$, where $v=v'v''$ and $a\in\Sigma_2$.  By Lemma~\ref{MiddleExtensions}, we may assume that $|v'|\leq 4$ or $|v''|\leq 4$.  First suppose that $|v'|\leq 4$.  Note that $v$ has prefix $\tt{0}(\tt{011})^{-1}\mu^4(\tt{0})=\tt{00100110010110}$.  By inspection, the extension $v'av''$ contains an overlap of period at most $4$.  Now suppose that $|v''|\leq 4$.  Since $u$ is a square with first letter $\tt{0}$, and $u$ ends in the square $\tt{11}$, the extension $va$ contains an overlap.  Thus we may assume that $1\leq |v''|\leq 4$.  If $k$ is even, then $v$ has suffix $\mu^4(\tt{0})\tt{011}$, and by inspection, the extension $v'av''$ contains an overlap of period at most $6$.  If $k$ is odd, then $v$ has suffix $\mu^4(\tt{1})\tt{011}$, and by inspection, the extension $v'av''$ contains an overlap of period at most $6$.
\end{proof}

\begin{lemma}\label{ThreeTimesPowerOfTwoPlusOne}
For every integer $k\geq 3$, there is an extremal overlap-free word of length $3\cdot 2^k+1$.
\end{lemma}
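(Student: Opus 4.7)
The plan is to adapt the argument of Lemma~\ref{PowerOfTwoPlusOne}, with the overlap-free square $\mu^{k-1}(\tt{010010})$ (of length $3\cdot 2^k$) taking over the role that $\mu^{k-1}(\tt{00})$ (of length $2^k$) played there.  Since $\tt{010010}\in A$, Theorem~\ref{ovlpf-sqs} tells us that $\mu^{k-1}(\tt{010010})$ and all of its conjugates are overlap-free squares, providing the required starting block.

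I would select a suitable conjugate $u=x^{-1}\mu^{k-1}(\tt{010010})x$ for some short word $x$, and prepend a letter $a\in\Sigma_2$ to form $v=au$, which has length $3\cdot 2^k+1$.  The pair $(x,a)$ must be chosen so that $v$ has a short prefix (of length $5$, say) that never recurs in $v$; this is needed both to conclude that $v$ is overlap-free and to control the left end of the extremality analysis.  To locate such a pair, I would examine the base case $k=3$ (target length $25$) by hand or a brief computer search, and then argue that the same $x$ and $a$ continue to work for all $k\geq 3$ using the self-similarity of $\mu$.

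To show $v$ is overlap-free, I would observe that $u$ is overlap-free (as a conjugate of a word in $\mathcal{A}$), so any overlap in $v$ must be a prefix containing the initial letter $a$.  Non-recurrence of the chosen length-$5$ prefix forces the period of such an overlap to be at most $4$, reducing the check to a finite inspection.  To show $v$ is extremal overlap-free, I would invoke Lemma~\ref{MiddleExtensions} to restrict attention to extensions $v'bw''$ with $|v'|\leq 4$ or $|w''|\leq 4$.  The left-end cases are handled by inspecting the explicit short prefix of $v$.  The cases $|w''|=0$ (i.e., the extensions $v\tt{0}$ and $v\tt{1}$) follow because $u$ is a square whose first and last letters differ, so each of $v\tt{0}$ and $v\tt{1}$ completes a longer square and hence contains an overlap, exactly as in Case~III of Lemma~\ref{PowerOfTwo}.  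For $1\leq |w''|\leq 4$, I would split on the parity of $k$ — which determines whether the tail of $v$ resembles $\mu^{k-1}(\tt{0})$ or $\mu^{k-1}(\tt{1})$ — and inspect the handful of extensions in each subcase.

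The main obstacle is pinning down the pair $(x,a)$: many superficially reasonable choices either make $v$ itself contain an overlap or leave some near-endpoint extension overlap-free.  Once the right $(x,a)$ is identified, all remaining verifications are finite, bounded uniformly in $k$, and purely mechanical, exactly mirroring the final paragraphs of the proof of Lemma~\ref{PowerOfTwoPlusOne}.
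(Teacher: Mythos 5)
Your proposal is correct and follows essentially the same route as the paper: the paper takes $u=(\tt{011})^{-1}\mu^{k-1}(\tt{010010})\tt{011}$ and $v=\tt{0}u$ --- i.e., exactly the same conjugating word $x=\tt{011}$ and prepended letter $a=\tt{0}$ as in Lemma~\ref{PowerOfTwoPlusOne} --- and declares the rest ``strictly analogous'' to that proof. The pair $(x,a)$ you worried about is thus not an obstacle at all; the choice from the $2^k+1$ case carries over verbatim.
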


\begin{proof}
Fix $k\geq 3$.  Let $u=(\tt{011})^{-1}\mu^{k-1}(\tt{010010})\tt{011}$.  Note that $u$ is a conjugate of $\mu^{k-1}(\tt{010010})$.  In particular, we have that $u$ is a square of length $3\cdot 2^k$, and by Theorem~\ref{ovlpf-sqs}, we see that $u$ is overlap-free.  We claim that the word $v=\tt{0}u$
is extremal overlap-free.  The remainder of the proof is strictly analogous to the proof of Lemma~\ref{PowerOfTwoPlusOne}.
\end{proof}

We now prove the analogue of Proposition~\ref{EvenLengths} for odd $n$.

\begin{proposition}\label{OddLengths}
Let $n$ be a nonnegative odd number.  Then there is an extremal overlap-free word of length $n$ over the alphabet $\Sigma_2$ if and only if $n\in\mathcal{N}$.
\end{proposition}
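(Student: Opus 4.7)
The plan is to assemble previously established results. First I would observe that the odd elements of $\mathcal{N}$ are exactly $\{2^k+1 : k \geq 5\} \cup \{3\cdot 2^k + 1 : k \geq 3\}$, since the other two families comprising $\mathcal{N}$ consist of even numbers.

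For the sufficiency direction, I would split into two subcases based on which family $n$ belongs to. If $n = 2^k + 1$ with $k \geq 5$, then Lemma~\ref{PowerOfTwoPlusOne} immediately supplies an extremal overlap-free word of length $n$. If $n = 3 \cdot 2^k + 1$ with $k \geq 3$, then Lemma~\ref{ThreeTimesPowerOfTwoPlusOne} does the same. This half of the proposition is thus a one-line assembly.

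For the necessity direction, suppose $n$ is odd and admits an extremal overlap-free binary word. Proposition~\ref{OddLengthExistence} forces $n = 2^k + 1$ or $n = 3 \cdot 2^k + 1$ for some $k \geq 0$. Since $n$ is odd, $k \geq 1$ in either form, so $n$ lies in $\{3,5,9,17,33,65,\ldots\} \cup \{7,13,25,49,97,\ldots\}$. Every such value is in $\mathcal{N}$ except for the six exceptional lengths $\{3,5,7,9,13,17\}$, coming from $k \in \{1,2,3,4\}$ in the first form and $k \in \{1,2\}$ in the second. For each of these six lengths, I would perform an exhaustive backtracking search over $\Sigma_2^n$ (entirely analogous to the small-case handling in the proof of Proposition~\ref{EvenLengths}) to confirm that no extremal overlap-free word of that length exists.

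There is no serious obstacle here: the only genuine work is the finite verification for six small lengths, which is completely routine at these sizes, and the rest is a direct appeal to Lemmas~\ref{PowerOfTwoPlusOne} and \ref{ThreeTimesPowerOfTwoPlusOne} together with the structural restriction coming from Proposition~\ref{OddLengthExistence}.
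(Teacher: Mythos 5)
Your proposal is correct and matches the paper's proof essentially verbatim: sufficiency via Lemmas~\ref{PowerOfTwoPlusOne} and \ref{ThreeTimesPowerOfTwoPlusOne}, and necessity via Proposition~\ref{OddLengthExistence} plus an exhaustive computer search ruling out the six small odd lengths $3,5,7,9,13,17$.
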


\begin{proof}
$(\Leftarrow)$ Let $n\in\mathcal{N}$.  Since $n$ is odd, we must have either $n=2^k+1$ for some $k\geq 5$, or $n=3\cdot 2^k+1$ for some $k\geq 3$.  In the former case, there is an extremal overlap-free word of length $n$ by Lemma~\ref{PowerOfTwoPlusOne}, and in the latter case, there is an extremal overlap-free word of length $n$ by Lemma~\ref{ThreeTimesPowerOfTwoPlusOne}.

$(\Rightarrow)$ Suppose that there is an extremal overlap-free word of length $n$ over the alphabet $\{0,1\}$.  By Proposition~\ref{OddLengthExistence}, we must have $n=2^k+1$ or $n=3\cdot 2^k+1$ for some $k$.  By exhaustive computer search, there is no extremal overlap-free word of length $2^k+1$ for $k\leq 4$, and no extremal overlap-free word of length $3\cdot 2^k+1$ for $k\leq 2$.  Thus, we conclude that $n\in\mathcal{N}$.
\end{proof}

Together, Proposition~\ref{EvenLengths} and Proposition~\ref{OddLengths} give Theorem~\ref{main}.

\section{\texorpdfstring{Extremal \boldmath{$\beta$}-free binary words}{Extremal beta-free binary words}}\label{BetaSection}

This section is devoted to the proof of Theorem~\ref{BetaTheorem}.  Another definition facilitates our proof method.

\begin{definition}
Let $w$ be a word over a fixed alphabet $\Sigma$, and let $\alpha,\beta\in \Rext$ satisfy $1<\alpha\leq \beta$.  We say that $w$ is \emph{$(\alpha,\beta)$-extremal} if $w$ is $\alpha$-free, and every extension of $w$ contains a factor of exponent greater than or equal to $\beta$.
\end{definition}

If $w$ is $(\alpha,\beta)$-extremal, then for any $\gamma\in \Rext$ such that $\alpha\leq \gamma\leq \beta$, the word $w$ is extremal $\gamma$-free.  Thus, the following result immediately implies Theorem~\ref{BetaTheorem}.

\begin{proposition}\label{Levels} All of the following hold.
\begin{enumerate}[label=\textnormal{(\alph*)}]
\item\label{Level1} There are arbitrarily long $(2^+,7/3)$-extremal binary words.
\item\label{Level2} There are arbitrarily long $(7/3^+,17/7)$-extremal binary words.
\item\label{Level3} There are arbitrarily long $(17/7^+,5/2)$-extremal binary words.
\item\label{Level4} There are arbitrarily long $(5/2^+,18/7)$-extremal binary words.
\item\label{Level5} There are arbitrarily long $(18/7^+,8/3)$-extremal binary words.
\end{enumerate}
\end{proposition}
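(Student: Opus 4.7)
The plan is to prove each of the five parts via a common template modeled on the proof of Theorem~\ref{main}: for each pair $(\alpha_i,\beta_i)$ I would identify a binary morphism $h_i$ preserving $\alpha_i$-freeness, define an ``$h_i$-earmarked'' notion whose endpoint decorations force any near-boundary extension to contain a factor of exponent at least $\beta_i$, and verify by finite computer check an analog of Lemma~\ref{MiddleExtensions} guaranteeing that every deep-middle extension already contains such a factor. Iterating $h_i$ starting from a suitable seed then yields arbitrarily long $(\alpha_i,\beta_i)$-extremal words.

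For part~\ref{Level1} the template is essentially already in place from Section~\ref{EvenSection}, and the only additional content is a short exponent audit: an overlap of period $p$ has exponent $(2p+1)/p$, and this is at least $7/3$ precisely when $p\leq 3$. Lemma~\ref{MiddleExtensions} already produces overlaps of period at most $3$ in every middle extension, so it suffices to revisit the boundary cases in Lemma~\ref{EarmarkedToExtremal}, Lemma~\ref{PowerOfTwo}, Lemma~\ref{PowerOfTwoPlusOne}, and Lemma~\ref{ThreeTimesPowerOfTwoPlusOne} and verify that each of the overlaps exhibited there has period at most $3$ (tightening the earmarking patterns if necessary). The infinite family produced by Corollary~\ref{NotAPowerOfTwo} then provides arbitrarily long $(2^+,7/3)$-extremal binary words.

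For parts~\ref{Level2}--\ref{Level5} the plan is to replace the Thue--Morse morphism $\mu$ by a morphism $h_i$ tailored to the target exponent pair $(\alpha_i^+,\beta_i)$. Concretely I would, first, establish an analog of Lemma~\ref{MiddleExtensions} by an exhaustive check on all $\alpha_i$-free binary words up to some explicit length $L_i$, showing that every sufficiently interior extension contains a factor of exponent at least $\beta_i$; second, choose short prefix and suffix patterns so that inserting any letter within a bounded distance of either endpoint completes a factor of exponent at least $\beta_i$; third, prove that applying $h_i$ and possibly complementing a controlled number of endpoint letters sends $h_i$-earmarked words to longer $h_i$-earmarked words that are also $(\alpha_i^+,\beta_i)$-extremal; and fourth, use \tt{Walnut} in the style of Lemma~\ref{TMEarmarked} to confirm that some convenient $\alpha_i$-free automatic sequence contains $h_i$-earmarked factors of unbounded length. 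Iterating $h_i$ from a seed produced in the fourth step yields the desired infinite family.

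The principal obstacle is the choice of $h_i$ and the matching boundary patterns for each level: not every binary morphism preserves $\alpha_i^+$-freeness for the relevant values, and the endpoint-complementation trick of Lemma~\ref{EarmarkedToExtremal} must itself preserve $\alpha_i^+$-freeness. A secondary complication is that $7/3$, $17/7$, $5/2$, $18/7$, and $8/3$ form a delicate ladder of ``smallest forced repetitions'' in binary $\alpha_i^+$-free words, so the finite check underlying the middle-extensions step has to be sized carefully in each case to rule out sporadic counterexamples; for the trickier levels (in particular \ref{Level2} and \ref{Level4}, where the period underlying $\beta_i$ is $7$) I would expect the verification length $L_i$ to be substantially larger than in the overlap-free setting, and I would be prepared to supplement the morphic construction with \tt{Walnut}-level extremality certificates rather than a purely combinatorial inductive lemma.
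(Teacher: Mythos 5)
Your plan for parts~\ref{Level2}--\ref{Level5} rests on an analogue of Lemma~\ref{MiddleExtensions} for each pair $(\alpha_i,\beta_i)$: that every sufficiently interior extension of \emph{every} long $\alpha_i$-free binary word contains a factor of exponent at least $\beta_i$. This is the step that fails. Already at level~\ref{Level2}, the class of $7/3^+$-free binary words contains all overlap-free words, and for those Lemma~\ref{MiddleExtensions} guarantees only an overlap of period at most $3$; a period-$3$ overlap has exponent $7/3<17/7$, and there is no reason a middle extension of an overlap-free word must contain anything of exponent $17/7$ (if it always did, levels~\ref{Level1} and~\ref{Level2} would essentially collapse). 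So no finite check at any length $L_i$ can establish the lemma you need, because the statement itself is false. The paper avoids this entirely: for parts~\ref{Level2}--\ref{Level5} it does not iterate a binary morphism at all, but takes an arbitrary \emph{ternary square-free} word $u=avb$, applies a uniform synchronizing morphism $f\colon\Sigma_3^*\rightarrow\Sigma_2^*$ to $v$, and wraps the image with hand-chosen words $r$ and $s$. The extension property is then verified only for these specific words, block by block (internal extensions of each $f(c)$, the junctions with $r$ and $s$, and insertions between blocks, which create cubes because every block starts with $\tt{00}$ and ends with $\tt{11}$), while $\alpha_i$-freeness of $f(u)$ is reduced to a finite check via the Ochem-style transfer result, Lemma~\ref{OchemExtension}. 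Your proposal contains neither the ternary backbone nor that transfer lemma, and without them the inductive ``earmarked'' machinery has nothing to stand on at these exponents.

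Part~\ref{Level1} of your plan also has a concrete defect: the extremal overlap-free words of Sections~\ref{EvenSection} and~\ref{OddSection} are not $(2^+,7/3)$-extremal, because the overlaps exhibited at the two ends (for instance in Case~III of the proof of Lemma~\ref{PowerOfTwo}, where the squares $\mu^\ell(s)$ and $\mu^\ell(\tt{101101})$ are completed to overlaps by appending one letter) have periods growing like $2^\ell$, hence exponents barely above $2$ and far below $7/3$. ``Tightening the earmarking patterns if necessary'' is precisely the nontrivial content you would need to supply; the paper does so with a fresh construction, $x=\tt{00}\mu^2(\tt{11}v\tt{11})\tt{00}$ where $\tt{011}v\tt{110}$ is a Thue--Morse factor, whose doubled letters at both ends force short-period repetitions under every boundary insertion.
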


We prove the first part of Proposition~\ref{Levels} now.

\begin{proof}[Proof of Proposition~\ref{Levels}\ref{Level1}]
Let $u$ be a factor of the Thue-Morse word of the form $\tt{011}v\tt{110}$, where $v$ is a nonempty word.  Note that there are arbitrarily long words of this form.  We claim that the word $x=\tt{00}\mu^2(\tt{11}v\tt{11})\tt{00}$ is $(2^+,7/3)$-extremal.

First we show that $x$ is $2^+$-free (or in other words, overlap-free).  Since $u$ is a factor of the Thue-Morse word, we have that $u$, and hence $\mu^2(u)$, are overlap-free.  Since the word $\mu^2(u)$ contains the word $\tt{0}\mu^2(\tt{11}v\tt{11})\tt{0}$ as a factor, any overlap contained in $x$ must be either a prefix or a suffix of $x$.  Suppose without loss of generality that $x$ contains an overlap $z$ as a prefix.  Since the factor $\tt{00100}$ does not appear in the Thue-Morse word, this factor appears only as a prefix and a suffix of $x$.  So $z$ must have period at most $4$.  But this is impossible by inspection.

It remains to show that every extension of $x$ contains a factor of exponent at least $7/3$.  Consider an extension $x'ax''$ of $x$, where $x=x'x''$ and $a\in\Sigma_2$.  By Lemma~\ref{MiddleExtensions}, we may assume that $|x'|\leq 4$ or $|x''|\leq 4$.  First suppose that $|x'|\leq 4$.  Note that $x$ has prefix $\tt{00}\mu^2(\tt{11})=\tt{0010011001}$.  By inspection, the extension $x'ax''$ contains a factor of exponent at least $7/3$.  The case that $|x''|\leq 4$ is handled by a symmetric argument.
\end{proof}

One of the main tools that we use to prove Proposition~\ref{Levels} parts~\ref{Level2}-\ref{Level5} is the following extension of a lemma due to Ochem~\cite[Lemma~2.1]{Ochem2006}.  A morphism $f:\Sigma^*\rightarrow \Delta^*$ is called \emph{$q$-uniform} if $|f(a)|=q$ for all $a\in\Sigma$, and is called \emph{synchronizing} if for any $a,b,c\in\Sigma$ and $u,v\in \Delta^*$, if $f(ab)=uf(c)v$, then either $u=\varepsilon$ and $a=c$, or $v=\varepsilon$ and $b=c$.

\begin{lemma}\label{OchemExtension}
Let $a,b\in\mathbb{R}$ satisfy $1<a<b$.  Let $\alpha\in\{a,a^+\}$ and $\beta\in\{b,b^+\}$.  Let $h\colon \Sigma^*\rightarrow \Delta^*$ be a synchronizing $q$-uniform morphism.  If $h(w)$ is $\beta$-free for every $\alpha$-free word $w$ such that
\[
|w|\leq \max\left\{\frac{2b}{b-a},\frac{2(q-1)(2b-1)}{q(b-1)}\right\},
\]
then $h(z)$ is $\beta$-free for every $\alpha$-free word $z\in\Sigma^*$.
\end{lemma}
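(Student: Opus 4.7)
The plan is to argue by contradiction. Suppose there exists an $\alpha$-free word $z \in \Sigma^*$ such that $h(z)$ contains a factor of exponent at least $\beta$, and choose such $z$ of minimum length $n = |z|$. The goal is to show $n \leq M := \max\{2b/(b-a), 2(q-1)(2b-1)/(q(b-1))\}$, which by the hypothesis of the lemma would force $h(z)$ to be $\beta$-free, a contradiction. Let $v$ be a factor of $h(z) = h(z_1) \cdots h(z_n)$ with minimal period $p$ and exponent $|v|/p \geq b$ (strict if $\beta = b^+$). By minimality of $n$, neither $z_1$ nor $z_n$ can be deleted while preserving $v$ as a factor, so $v$ starts inside the block $h(z_1)$ and ends inside the block $h(z_n)$. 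This yields $|v| \geq (n-2)q + 2$, and hence the key bound $n \leq (|v| + 2q - 2)/q$.

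The proof now splits on whether $p$ is a multiple of $q$. If $p \not\equiv 0 \pmod{q}$, let $u$ be the prefix of $v$ of length $|v| - p$; by the period condition $u$ also appears at position $p+1$ inside $v$, so $u$ occurs twice in $h(z)$ at positions differing by $p$. If $|u| \geq 2q - 1$, then $u$ necessarily contains a complete block image $h(c)$ at some offset, and since $h$ is synchronizing, every occurrence of $h(c)$ as a factor of $h(z)$ must start at a block boundary. The two shifted occurrences of this block then yield block-boundary positions of $h(z)$ differing by $p$, forcing $q \mid p$ -- a contradiction. Hence $|u| \leq 2q - 2$, so $|v| \leq p + 2q - 2$, and combining with $|v| \geq bp$ gives $p(b-1) \leq 2q - 2$ and thus $|v| \leq 2(q-1)b/(b-1)$. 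Substituting into the bound on $n$ yields $n \leq 2(q-1)(2b-1)/(q(b-1))$.

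If instead $p = kq$ for some positive integer $k$, I will push the repetition back to $z$ itself. First, a short argument using the synchronizing property shows $h$ is injective on letters: if $h(a) = h(b)$ with $a \neq b$, then the factorization $h(ab) = \varepsilon \cdot h(b) \cdot h(b)$ violates the synchronization condition. Then for any blocks $j$ and $j+k$ both fully contained in $v$, the periodicity of $v$ forces $h(z_j) = h(z_{j+k})$, hence $z_j = z_{j+k}$. Losing at most one block at each end of $v$, this yields a factor of $z$ of period at most $k$ and length at least $n-2$, and so of exponent at least $(n-2)/k$. If this exponent meets the threshold forbidden by $\alpha$-freeness (namely $\geq a$ when $\alpha = a$ and $> a$ when $\alpha = a^+$), we contradict the $\alpha$-freeness of $z$. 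Otherwise $n \leq ak + 2$ (strictly or not, as appropriate), and combined with $bk \leq |v|/q \leq n$ this yields $(b-a)k \leq 2$ and hence $n \leq 2b/(b-a)$.

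In both cases $n \leq M$, giving the desired contradiction. The main technical hurdle is handling the four combinations of $\alpha \in \{a, a^+\}$ and $\beta \in \{b, b^+\}$ uniformly, ensuring that the exponent produced in $z$ strictly or non-strictly exceeds $a$ as required; the bookkeeping for the degenerate regime $k \geq n - 2$ and for edge effects in counting fully-contained blocks of $z$ inside $v$ also needs care, though it contributes only a small additive term already absorbed into the bound $2(q-1)(2b-1)/(q(b-1))$.
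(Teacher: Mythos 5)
Your proof is correct and follows essentially the same strategy as the paper's: take a minimal counterexample, bound its length via $|v|\geq (n-2)q+2$, and split into a ``short repetition'' case yielding the bound $\frac{2(q-1)(2b-1)}{q(b-1)}$ and a ``pull back to the preimage'' case yielding $\frac{2b}{b-a}$ via $\alpha$-freeness. The only cosmetic difference is that you organize the case split around whether $q$ divides the period $p$, whereas the paper splits on whether the excess $R=|v|-p$ exceeds $2q-2$; these are the same dichotomy, since in each treatment the synchronizing property is exactly what shows that a long excess forces $q\mid p$ (and conversely), and the resulting bounds coincide.
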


\begin{proof}
Suppose that there is an $\alpha$-free word $w$ such that the word $W=h(w)$ contains a factor of exponent greater than or equal to $\beta$, and assume without loss of generality that $w$ is a shortest word satisfying this property.  We will show that $|w|\leq \max\left\{\frac{2b}{b-a},\frac{2(q-1)(2b-1)}{q(b-1)}\right\}$, which gives the theorem statement.

Let $X$ be a factor of $W$ of exponent greater than or equal to $\beta$.  Let $P$ be the period of $X$, and write $X=UV$, where $|U|=P$.  Since $X$ has period $P$, we can also write $X=VU'$ for some word $U'\in \Delta^*$.  Let $R=|V|$.  Then we have $\frac{P+R}{P}\geq b$, or equivalently $P\leq\frac{R}{b-1}$.

First suppose that $R\leq 2q-2=2(q-1)$.  Then we have
\[
|X|=P+R\leq\tfrac{R}{b-1}+R=\frac{Rb}{b-1}\leq \frac{2(q-1)b}{b-1}.
\]
By the minimality of $w$, we must have $|w|\leq \frac{|X|-2}{q}+2$.  Putting this together with the above bound on $|X|$, we find $|w|\leq \frac{2(q-1)(2b-1)}{q(b-1)}$.

Now suppose that $R\geq 2q-1$.  Write $V=V_1h(v)V_2$ for some word $v\in \Sigma^*$, where the word $V_1$ is a proper suffix of a block of $h$, and the word $V_2$ is a proper prefix of a block of $h$.  Let $r=|v|$.  Since $R\geq 2q-1$, we must have $r\geq 1$.  Further, since $|V_1|,|V_2|<q$, we have $R<qr+2q$.  Similarly, write $X=X_1h(x)X_2$ for some word $x\in\Sigma^*$, where the word $X_1$ is a proper suffix of a block of $h$, and the word $X_2$ is a proper prefix of a block of $h$.  Since $X=UV=VU'$, and since $h$ is synchronizing, it must be the case that $X_1=V_1$ and $X_2=V_2$.  It follows that we may write $x=uv=vu'$ for some words $u,u'\in\Sigma^*$, i.e., the word $x$ has period $|u|$.  Let $p=|u|$.  Note that $h(u)=V_1^{-1}UV_1$, so $|h(u)|=|U|=P$, and hence $qp=P$.

Since $w$ is $\alpha$-free, we must have $\frac{p+r}{p}\leq a$, or equivalently $r\leq (a-1) p$.  Now 
\[
qp=P\leq\frac{R}{b-1}<\frac{qr+2q}{b-1}\leq q\cdot\frac{r+2}{b-1}\leq q\cdot \frac{(a-1)p+2}{b-1},
\]
from which we conclude that $p<\frac{(a-1)p+2}{b-1}$, or equivalently, that $p<\frac{2}{b-a}$.  Finally, by the minimality of $w$, we must have
\[
|w|\leq 2+p+r\leq 2+ap<2+\frac{2a}{b-a}=\frac{2b}{b-a}.
\]
We conclude in either case that $|w|\leq \max\left\{\frac{2b}{b-a},\frac{2(q-1)(2b-1)}{q(b-1)}\right\}$, as desired.
\end{proof}

We are now ready to prove the remaining parts of Proposition~\ref{Levels}.  We use the following terminology in the proof.  Let $w$ be a word over a fixed alphabet $\Sigma$.   A \emph{left extension} of $w$ is a word of the form $aw$, where $a\in\Sigma$.  A \emph{right extension} of $w$ is a word of the form $wa$, where $a\in\Sigma$.  An \emph{internal extension} of $w$ is a word of the form $w'aw''$, where $|w'|,|w''|\geq 1$, we have $a\in \Sigma$, and $w'w''=w$.

\begin{proof}[Proof of Proposition~\ref{Levels}\ref{Level2}]
Let $u\in\Sigma_3^*$ be a square-free word of length at least $3$, and write $u=avb$, where $a,b\in\Sigma_3$.  Define $f:\Sigma_3^*\rightarrow\Sigma_2^*$ by
\begin{align*}
f(\tt{0})&=\tt{001011001101100100110100110110010011}\\
f(\tt{1})&=\tt{001011001101100100110110010110010011}\\
f(\tt{2})&=\tt{001011001101100101100100110110010011}.
\end{align*} 
Let $r=\tt{1100110010011}$ and $s=\tt{001001}$.  We claim that the word $w=rf(v)s$ is $(7/3^+,17/7)$-extremal.

First of all, we verify the following statements by computer for every letter $c\in\Sigma_3$: 
\begin{itemize}
\item Every internal extension of the word $f(c)$ contains a factor of exponent at least $17/7$.
\item Every left extension and every internal extension of the word $rf(c)$ contains a factor of exponent at least $17/7$.
\item Every right extension and every internal extension of the word $f(c)s$ contains a factor of exponent at least $17/7$.
\end{itemize}
It now follows easily that every extension of the word $w=rf(v)s$ contains a factor of exponent at least $17/7$.  The only extensions of $w$ not checked above are those obtained by inserting a letter between two blocks  of $f$.  Since every block of $f$ begins in $\tt{00}$ and ends in $\tt{11}$, every such extension contains a cube.

It remains to show that $w$ is $7/3^+$-free.  We first show that $f(u)$ is $7/3^+$-free.  Note that $f$ is $36$-uniform, and we verify by computer that $f$ is synchronizing.  Thus, by Lemma~\ref{OchemExtension}, it suffices to check that $f(x)$ is $7/3^+$-free for every square-free word $x\in\Sigma_3^*$ such that $|x|\leq 14$, which we verify by computer.  Note that every block of $f(u)$ has prefix $s'=\tt{0010}$ and suffix $r'=\tt{0110010011}$.  So $f(u)$ contains the word $w'=r'f(v)s'$, and hence $w'$ is $7/3^+$-free.  Note that $s=s'\tt{01}$ and $r=\tt{110}r'$, so 
\[
w=rf(v)s=\tt{110}r'f(v)s'\tt{01}.
\]

Suppose that $w$ contains a factor $z$ of exponent greater than $7/3$.  Then $z$ begins at one of the first three letters of $w$, or ends at one of the last two letters of $w$.  Suppose first that $z$ begins at one of the first three letters of $w$.  We claim that the factor $t_r=\tt{00110010011}$, which occurs starting at the third letter of $w$, occurs only once in $w$.  To establish this claim, we verify the following by computer:
\begin{itemize}
\item For every $c\in\Sigma_3$, the word $t_r$ occurs exactly once in the word $rf(c)$, and does not occur in the word $f(c)s$.
\item For every square-free word $y\in\Sigma_3^*$ of length $2$, the word $t_r$ does not occur in $f(y)$.    
\end{itemize}
So we see that the period of $z$ is at most $13$.  However, this possibility is ruled out by computer check.  So we may assume that $z$ ends at one of the last two letters of $w$.  By a computer check similar to the one used for $t_r$, we verify that the factor $t_s=\tt{001001100100}$, which occurs ending at the second last letter of $w$, occurs only once in $w$.  So again, we see that the period of $z$ is at most $13$.  This possibility is ruled out by computer check.  
\end{proof}

We omit the details of the proofs of Proposition~\ref{Levels}\ref{Level3}--\ref{Level5}, as they are similar to the proof of Proposition~\ref{Levels}\ref{Level2}.

\begin{proof}[Proof of Proposition~\ref{Levels}\ref{Level3}]
Let $u\in\Sigma_3^*$ be a square-free word of length at least $3$, and write $u=avb$, where $a,b\in\Sigma_3$.  Define $f:\Sigma_3^*\rightarrow\Sigma_2^*$ by
\begin{align*}
f(\tt{0})&=\tt{001001100101100100110010110100110010110011011}\\
f(\tt{1})&=\tt{001001100101100110100101100110110010110011011}\\
f(\tt{2})&=\tt{001001100101100110100110110011010010110011011}.
\end{align*} 
Let $r=\tt{00110110011011}$ and $s=\tt{00100110010011}$.  By a method similar to the one used in the proof of Proposition~\ref{Levels}\ref{Level2}, one can show that the word $w=rf(v)s$ is $(17/7^+,5/2)$-extremal.
\end{proof}

\begin{proof}[Proof of Proposition~\ref{Levels}\ref{Level4}]
Let $u\in\Sigma_3^*$ be a square-free word of length at least $3$, and write $u=avb$, where $a,b\in\Sigma_3$.  Define $f:\Sigma_3^*\rightarrow\Sigma_2^*$ by
\begin{align*}
f(\tt{0})&=\tt{0011011001001100101100110110010011}\\
f(\tt{1})&=\tt{0011011001001101001101100110010011}\\
f(\tt{2})&=\tt{0011011001001101100110100110010011}.
\end{align*} 
Let $r=\tt{00110110011011001010011}$ and $s=\tt{00110101100100110010011}$.  By a method similar to the one used in the proof of Proposition~\ref{Levels}\ref{Level2}, one can show that the word $w=rf(v)s$ is $(5/2^+,18/7)$-extremal.
\end{proof}

\begin{proof}[Proof of Proposition~\ref{Levels}\ref{Level5}]
Let $u\in\Sigma_3^*$ be a square-free word of length at least $3$, and write $u=avb$, where $a,b\in\Sigma_3$.  Define $f:\Sigma_3^*\rightarrow\Sigma_2^*$ by%
\begin{align*}
f(\tt{0})&=\tt{0011011001001100101100110110010011}\\
f(\tt{1})&=\tt{0011011001001101100110100110010011}\\
f(\tt{2})&=\tt{0011011001101001100100110110010011}.
\end{align*} 
Let $r=\tt{01101100110110011001010011}$ and $s=\tt{00110101100110010011001001}$.  By a method similar to the one used in the proof of Proposition~\ref{Levels}\ref{Level2}, one can show that the word $w=rf(v)s$ is $(18/7^+,8/3)$-extremal.
\end{proof}

\section{Conclusion}

In this paper, we have focused on extremal $\beta$-free words over the binary alphabet $\Sigma_2$.  First, we characterized the lengths of extremal $2^+$-free (i.e., overlap-free) words over $\Sigma_2$.  We then made some significant progress on Problem~\ref{BinaryProblem} by establishing that there are arbitrarily long extremal $\beta$-free words over $\Sigma_2$ for every $\beta\in \Rext$ such that $2^+\leq \beta\leq 8/3$.  Problem~\ref{BinaryProblem} remains open for $\beta\geq 8/3^+$.

We close with a discussion of some related problems over larger alphabets.  First of all, we have the following general problem which subsumes Problem~\ref{BinaryProblem}.

\begin{problem}\label{BigProblem}
Let $n\geq 2$ be an integer.  For which $\beta\in\Rext$ do there exist arbitrarily long extremal $\beta$-free words over $\Sigma_n$?
\end{problem}

For every integer $n\geq 2$, let $B_n$ denote the set of all $\beta\in \Rext$ such that there exist arbitrarily long extremal $\beta$-free words over $\Sigma_n$.  While it seems plausible that $B_n$ is an interval for every $n$, it is not immediately obvious to us that this is the case.

We note that Dejean's theorem gives us a partial answer to Problem~\ref{BigProblem}.  The \emph{repetition threshold} for $n$ letters, denoted $\RT(n)$, is defined by
\[
\RT(n)=\inf\{b\in\mathbb{R}\colon\ \text{there are arbitrarily long $b$-free words over $\Sigma_n$}\}.
\]
Dejean's theorem, originally conjectured by Dejean~\cite{Dejean1972}, and confirmed through the work of many authors~\cite{Dejean1972, CurrieRampersad2009,CurrieRampersad2009Again,CurrieRampersad2011,Rao2011, Carpi2007, MoulinOllagnier1992,Pansiot1984}, states that
\[
\RT(n)=\begin{cases}
2, &\text{if $n=2$;}\\
7/4, &\text{if $n=3$;}\\
7/5, &\text{if $n=4$;}\\
n/(n-1), &\text{if $n\geq 5$}.
\end{cases}
\]
In fact, for every $n\geq 2$, it is known that there are only finitely many $\RT(n)$-free words over $n$ letters, but infinitely many $\RT(n)^+$-free words over $n$ letters.  Thus, if there are arbitrarily long extremal $\beta$-free words over $\Sigma_n$, then $\beta> \RT(n)$.

\begin{conjecture}\label{RTconjecture}
For every $n\geq 2$, there are arbitrarily long extremal $\RT(n)^+$-free words over $\Sigma_n$.
\end{conjecture}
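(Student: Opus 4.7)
The plan is to generalize the construction in Section~\ref{BetaSection} from the binary setting to arbitrary $\Sigma_n$. For $n=2$, Theorem~\ref{main} already provides arbitrarily long extremal $\RT(2)^+$-free (i.e., overlap-free) binary words, so it suffices to treat $n\geq 3$. Writing $\beta_n=\RT(n)^+$, my aim is to find, for each such $n$, a real number $\gamma_n\geq\beta_n$, a synchronizing $q_n$-uniform morphism $f_n\colon\Sigma_3^*\to\Sigma_n^*$, and caps $r_n,s_n\in\Sigma_n^*$ such that for every square-free word $u=avb\in\Sigma_3^*$ with $a,b\in\Sigma_3$, the word $r_nf_n(v)s_n$ is $(\beta_n,\gamma_n)$-extremal. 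Since there are arbitrarily long square-free ternary words, this would yield arbitrarily long extremal $\beta_n$-free words over $\Sigma_n$.

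The verification would follow the template of the proof of Proposition~\ref{Levels}\ref{Level2}. One first invokes Lemma~\ref{OchemExtension} to reduce $\beta_n$-freeness of $f_n(x)$ for square-free $x$ to a finite computer check. One then designs $f_n$ so that every image block $f_n(c)$ begins with a fixed short prefix and ends with a fixed short suffix, arranging that a letter inserted at any block boundary immediately produces a factor of exponent at least $\gamma_n$. An analogue of Lemma~\ref{MiddleExtensions} for $\Sigma_n$ and threshold $\gamma_n$ must then be established (ideally with a small constant $L_n$ and by direct enumeration), reducing the extremality check to a bounded neighbourhood of each endpoint. Finally, a finite number of checks of the form ``every internal, left, or right extension of $f_n(c)$, $r_nf_n(c)$, or $f_n(c)s_n$ contains a factor of exponent at least $\gamma_n$,'' for each $c\in\Sigma_3$, completes the argument.

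The principal obstacle is the existence of the quadruple $(f_n,r_n,s_n,\gamma_n)$. Dejean's theorem says that $\beta_n$-free words over $\Sigma_n$ are abundant, which makes it correspondingly harder to force a factor of exponent at least $\gamma_n>\RT(n)$ by a single letter insertion; moreover, the admissible window $\gamma_n-\RT(n)$ shrinks as $n$ grows, so Lemma~\ref{OchemExtension} forces $f_n$ to satisfy freeness on ever longer square-free words. The binary morphisms of Section~\ref{BetaSection} were evidently found by targeted computer search, and such a search becomes rapidly less feasible for $n\geq 4$. A more structural route would be to exploit the Pansiot-type coding used in proofs of Dejean's theorem for $n\geq 5$, in which the Dejean word arises as the image of an infinite word over a smaller alphabet under an explicit encoding: if the boundary of the encoded word can be tailored to provoke a near-repetition, one might obtain a uniform construction for all large $n$. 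The cases $n=3$ and $n=4$, where $\RT(n)=7/4$ and $7/5$ respectively, would likely require separate, computer-assisted treatments, possibly by adapting the Thue-Morse-based machinery of Section~\ref{EvenSection} to a Dejean-threshold fixed point in the spirit of Lemmas~\ref{EarmarkedToExtremal} and~\ref{TMEarmarked}.
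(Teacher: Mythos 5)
This statement is Conjecture~\ref{RTconjecture}; the paper offers no proof of it and explicitly leaves it open (it is verified there only for $n=2$, via Theorem~\ref{main}, since $\RT(2)^+=2^+$). Your proposal is therefore not competing with an argument in the paper but attempting to settle an open problem, and as written it does not do so: it is a research plan whose essential steps are all deferred. Concretely, for every $n\geq 3$ you require a threshold $\gamma_n>\RT(n)$, a synchronizing uniform morphism $f_n\colon\Sigma_3^*\to\Sigma_n^*$, and caps $r_n,s_n$ making $r_nf_n(v)s_n$ $(\RT(n)^+,\gamma_n)$-extremal, but you construct none of these objects, perform none of the finite verifications that the template of Proposition~\ref{Levels}\ref{Level2} demands, and do not establish the analogue of Lemma~\ref{MiddleExtensions} over $\Sigma_n$. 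That last point is not a routine check: over larger alphabets an inserted letter can often be chosen to avoid any short repetition near the insertion point, precisely because more letters are available, so there is no a priori reason a bounded-radius version of Lemma~\ref{MiddleExtensions} holds at a threshold $\gamma_n$ only slightly above $\RT(n)$. You acknowledge this yourself (``the principal obstacle is the existence of the quadruple''), which is an accurate self-assessment: the existence of such a quadruple for each $n$ is essentially equivalent to the conjecture, so the proposal reduces the conjecture to an unverified family of claims of comparable difficulty.

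That said, the strategy you outline is the natural one and matches how the paper attacks the binary case: encode a square-free ternary word by a carefully chosen synchronizing uniform morphism, use Lemma~\ref{OchemExtension} to certify freeness, and engineer the block boundaries and caps so that every insertion creates a forbidden repetition. The suggestion to exploit Pansiot-type codings for $n\geq 5$ is a reasonable direction for a uniform treatment, but until explicit morphisms are exhibited and the finite checks carried out (even for a single value $n=3$, where $\RT(3)^+=7/4^+$ is strictly stronger than the square-freeness handled by Grytczuk et al.), the statement remains unproved.
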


We define the \emph{extremal repetition threshold} over $n$ letters, denoted $\ERT(n)$, by
\[
\ERT(n)=\sup\left\{b\in\mathbb{R}\colon\ \text{there are arbitrarily long extremal $b^+$-free words over $\Sigma_n$}\right\}.
\]
By Theorem~\ref{BetaTheorem}, we know that $\ERT(2)\geq 8/3$.  From the work of Grytczuk et al.~\cite{Grytczuk2019}, we know that $\ERT(3)\geq 2$.  It may be the case that $\ERT(2)=8/3$ and $\ERT(3)=2$, but we have only weak computational evidence supporting this. 

If Conjecture~\ref{RTconjecture} is true, then $\ERT(n)\geq \RT(n)$ for every $n\geq 2$.  We conjecture further that $\ERT(n)$ is finite for every $n\geq 2$.  In fact, we make the following stronger conjecture, which subsumes Conjecture~\ref{BinaryConjecture}.

\begin{conjecture}\label{AlphaConjecture}
Let $n\geq 2$ be an integer.  Then there is some number $\alpha_n\in \Rext$ such that for all $\beta\in \Rext$ satisfying $\beta\geq \alpha_n$, there are no extremal $\beta$-free words over $\Sigma_n$.
\end{conjecture}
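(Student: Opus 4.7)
The plan is to attempt to show that for $\beta$ sufficiently large depending on $n$, every $\beta$-free word over $\Sigma_n$ admits at least one $\beta$-free extension, which immediately rules out extremal $\beta$-free words at all lengths. My first attempt is to restrict to right extensions: writing $N = |w|$, the extension $wa$ fails to be $\beta$-free exactly when some suffix of $wa$ has exponent at least $\beta$. For any period $p$, such a suffix must have length exactly $\lceil \beta p \rceil$, and $\beta$-freeness of $w$ itself forces $a = w[N + 1 - p]$. Call such a $p$ \emph{dangerous} and set $b_p = w[N + 1 - p]$; a safe right extension then exists whenever the set $B(w) = \{b_p : p \text{ dangerous}\}$ has fewer than $n$ elements.

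The key technical step is to bound $|B(w)|$ via Fine and Wilf. If $p_1 < p_2$ are both dangerous with $p_2 \leq (\beta - 1) p_1$, then the suffix of $w$ of length $\lceil \beta p_1 \rceil - 1$ carries both periods $p_1$ and $p_2$, so Fine and Wilf forces $\gcd(p_1, p_2)$ to be a period. If $\gcd(p_1, p_2) < p_1$, then $\gcd(p_1, p_2) \leq p_1/2$, and the resulting suffix would have exponent at least $(\beta p_1 - 1)/\gcd(p_1, p_2) \geq 2\beta - 2/p_1$, exceeding $\beta$ for any $\beta > 2$; this contradicts $\beta$-freeness. Hence $p_1 \mid p_2$, and iterating the period $p_1$ through the suffix gives $b_{p_1} = b_{p_2}$. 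The dangerous periods therefore collapse into geometric scales of ratio $\beta - 1$, each scale contributing a single letter to $B(w)$, yielding $|B(w)| = O(\log_{\beta - 1} N)$.

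This already handles words of length at most roughly $(\beta - 1)^n$, because for such $w$ the bound gives $|B(w)| < n$ and a safe right extension exists. The serious obstacle is the passage to arbitrary length. For long $w$, I would combine the right-extension argument with its symmetric left-extension counterpart and, when both still fail, sweep through the $n(N + 1)$ candidate internal insertions $w' a w''$. The hope is that a very long $\beta$-free word is forced by $\beta$-freeness to contain enough non-periodic behavior in its interior that the forbidden letter sets arising at different insertion positions cannot simultaneously cover every $(k, a)$ pair. Making this precise seems to require a genuinely global combinatorial invariant going beyond Fine and Wilf, and this is where I expect the real difficulty to lie; this is consistent with the authors' own remark that only very weak computational evidence currently supports the conjecture.
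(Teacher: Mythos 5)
This statement is a \emph{conjecture} in the paper; the authors offer no proof and explicitly say they have only very weak computational evidence for it (even for the candidate value $\alpha_2=8/3^+$). So there is no argument of theirs to compare yours against, and the only question is whether your proposal constitutes a proof. It does not, and you correctly identify where it stops. The right-extension analysis is a reasonable local lemma: a dangerous period $p$ forces the suffix of $w$ of length $\lceil\beta p\rceil-1$ to have period $p$, two dangerous periods $p_1<p_2\leq(\beta-1)p_1$ share that suffix long enough for Fine and Wilf, the non-divisibility case contradicts $\beta$-freeness, and divisibility forces $b_{p_1}=b_{p_2}$. This yields $|B(w)|=O(\log_{\beta-1}|w|)$ and hence a safe right extension for words of length bounded in terms of $\beta$ and $n$. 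But the conjecture asserts nonexistence of extremal $\beta$-free words of \emph{every} length, and for long words your bound on $|B(w)|$ exceeds $n$, at which point the argument has nothing further to say. The sweep over internal insertions is only a stated hope; no mechanism is proposed for why the forbidden-letter sets at the $n(|w|+1)$ insertion sites cannot jointly cover everything. That is not a technical loose end --- it is the entire content of the conjecture.

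A structural warning sign that the missing step is where all the difficulty lives: the Fine--Wilf collapse uses only $\beta>2$, yet by Theorems~\ref{main} and \ref{BetaTheorem} the conjecture's conclusion is \emph{false} for every $\beta$ with $2^+\leq\beta\leq 8/3$ over $\Sigma_2$ (arbitrarily long extremal $\beta$-free binary words exist there). So any correct completion must invoke the largeness of $\beta$ in an essential, global way that your sketch currently does not, and must in particular explain why the constructions of Section~\ref{BetaSection} (uniform morphisms with carefully chosen seams $r$ and $s$ that poison every insertion site) stop working once $\beta$ is large. Your local suffix-periodicity analysis is blind to exactly the kind of engineered interior structure those constructions exploit. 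As a partial result (``no extremal $\beta$-free word over $\Sigma_n$ has length at most roughly $(\beta-1)^{n-1}$''), your argument could be salvaged and is mildly interesting, but as a proof of Conjecture~\ref{AlphaConjecture} it is incomplete.
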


We close with the following problem, which appears to be quite difficult.

\begin{problem}
For every $n\geq 2$, find $\ERT(n)$ and the smallest number $\alpha_n$ for which Conjecture~\ref{AlphaConjecture} holds (if the conjecture is true).  It is possible that we have $\alpha_n=\ERT(n)^+$ for every $n$.
\end{problem}

\section*{Acknowledgements}

The authors wish to thank Trevor Clokie for helpful discussions.

\begin{appendices}
\section{}\label{Appendix}

The free software \tt{Walnut} used in the proof of Lemma~\ref{TMEarmarked} is available at
\url{https://github.com/hamousavi/Walnut},
and a manual for its use is~\cite{Walnut}.  The complete \tt{Walnut} code used in the proof of Lemma~\ref{TMEarmarked} is given below.

{\footnotesize
\begin{verbatim}
def overlap "(n >=8) & (1 <= p) & (s <= i) & (i+2*p < s+n)                     &
    (Aj ((j>=i)&(j<i+p+1)  &               (j+p < s+n-4) ) =>  T[j]  = T[j+p]) &
    (Aj ((j>=i)&(j<i+p+1)  &               (j+p = s+n-4) ) =>  T[j]  =  @0)    &
    (Aj ((j>=i)&(j<i+p+1)  & (j < s+n-4) & (j+p = s+n-3) ) =>  T[j]  =  @1)    &
    (Aj ((j>=i)&(j<i+p+1)  & (j < s+n-4) & (j+p = s+n-2) ) =>  T[j]  =  @0)    &
    (Aj ((j>=i)&(j<i+p+1)  & (j < s+n-4) & (j+p = s+n-1) ) =>  T[j]  =  @0)    &
    (Aj ~((j>=i)&(j<i+p+1) & (j = s+n-4) & (j+p = s+n-3)))                     &
    (Aj ~((j>=i)&(j<i+p+1) & (j = s+n-3) & (j+p = s+n-2)))                     &
    (Aj ~((j>=i)&(j<i+p+1) & (j = s+n-3) & (j+p = s+n-1)))":
    
def earmarked "(n>=8) & 
    (((T[s]= @0) & (T[s+1]= @0) & (T[s+2]= @1) & (T[s+3]= @0)) |
    ((T[s]= @1) & (T[s+1]= @1) & (T[s+2]= @0) & (T[s+3]= @1)))    &
    (Ai,p ((1 <= p) & (s <= i) & (i+2*p < s+n)) => ~($overlap(i,n,p,s)))":

def testEarmarked "Es $earmarked(n,s)":
\end{verbatim}}

\end{appendices}

\end{document}